\newtheorem{theorem}{Theorem}[section]
\newtheorem*{theorem*}{Theorem}
\newtheorem{lemma}[theorem]{Lemma}
\newtheorem{proposition}[theorem]{Proposition}
\newenvironment{customthm}[1]
  {\innercustomthm}
  {\endinnercustomthm}
\theoremstyle{definition}
\newtheorem{definition}[theorem]{Definition}
\newtheorem*{definition*}{Definition}
\theoremstyle{remark}
\newtheorem{remark}[theorem]{Remark}
\numberwithin{equation}{section}
\newcommand{\Tr}{\mathrm{Tr}}
\newcommand{\rk}{\mathrm{rk}}
\newcommand{\tr}{\mathrm{tr}}
\newcommand{\Ker}{\mathrm{Ker}}
\newcommand{\Id}{\mathrm{Id}}
\newcommand{\Scal}{\mathrm{scal}}
\newcommand{\End}{\mathrm{End}}
\newcommand{\Hom}{\mathrm{Hom}}
\newcommand{\Met}{\mathrm{Met}}
\newcommand{\Grass}{\operatorname{Grass}}
\newcommand{\Map}{\operatorname{Map}}
\newcommand{\CC}{{\mathbb C}}
\newcommand{\RR}{{\mathbb R}}
\newcommand{\vertiii}[1]{{\left\vert\kern-0.25ex\left\vert\kern-0.25ex\left\vert #1 
    \right\vert\kern-0.25ex\right\vert\kern-0.25ex\right\vert}}
\title[Quantization of Hitchin's equations for Higgs Bundles]{Quantization of Hitchin's equations  \\for Higgs Bundles I}
\author[M.~Garcia-Fernandez]{Mario Garcia-Fernandez}
  \address{MGF: Instituto de Ciencias Matem\'aticas, Nicol\'as Cabrera, no. 13-15, Campus de Cantoblanco, 28049 Madrid Spain}
\email{mario.garcia@icmat.es }
\author[J.~Keller]{Julien Keller}
  \address{JK: Centre de Math\'ematiques et Informatique, Aix-Marseille Universit\'e, 39, rue F. Joliot Curie, 13453 Marseille Cedex 13, France}
  \email{julien.keller@univ-amu.fr}
\author[J.~Ross]{Julius Ross}
  \address{JR: Department of Pure Mathematics and Mathematical Statistics, University of Cambridge, Wilberforce Road, Cambridge, CB3 0WB, UK}
  \email{j.ross@dpmms.cam.ac.uk}
\keywords{Higgs bundle, Hitchin equation, Yang-Mills-Higgs equation, balanced metric,
quantization, projective, Gieseker stability}
\begin{document}
\begin{abstract}
 We provide an algebraic framework for quantization of Hermitian metrics that are solutions of the Hitchin equation for Higgs bundles over a projective manifold.
 Using Geometric Invariant Theory, we introduce a notion of balanced metrics in this context. We show that balanced metrics converge at the quantum limit towards the solution of the Hitchin equation. We relate the existence of balanced metrics to the Gieseker stability of the Higgs bundle.
\end{abstract}

\date{January 19, 2016}

 \maketitle
 
 \setcounter{tocdepth}{2}

 \section*{Introduction}
The techniques of quantization in complex geometry give a way to approximate analytically defined objects by algebraic ones.  Two beautiful examples of this are the quantization of hermitian metrics on complex vector bundles over a K\"ahler manifold, in particular Hermitian-Einstein metrics, and the quantization of K\"ahler metrics on complex manifolds, in particular extremal K\"ahler metrics.  In both cases the theory has practical consequences, such as providing a method to compute numerical approximations to these metrics, as well as theoretical ones, such as uniqueness theorems and illuminating the connection with stability.

In this paper we provide a framework for quantization of metrics on a Higgs bundle, by which we mean a pair $(E,\phi)$ consisting of a holomorphic vector bundle $E$ of rank $\rk(E)\ge 2$ over a K\"ahler manifold $(X,\omega)$ and a holomorphic map
$$\phi\colon E\to E\otimes \Omega^1_X$$
called the \emph{Higgs field} that is required to satisfy $\phi\wedge \phi=0$.  A hermitian metric $h$ on $E$ is said to satisfy the \emph{Hitchin equation} (also called in the literature Hermitian-Yang-Mills equation for Higgs bundle) if
$$ \Lambda\left( \sqrt{-1} F_{h} + [\phi, \phi^{*h}]\right) = \frac{\mu(E)}{V} \Id_E.$$
Here $F_{h}$ denotes the curvature of the  Chern connection associated to $h$, $\phi^{*h}$ denotes the image of $\phi$ by the combination of the anti-holomorphic involution in $\End(E)$ determined by $h$ with the conjugation of 1-forms, $\Lambda$ is the contraction with the K\"ahler form $\omega$, $V$ is the volume of $(X,\omega)$ and $\mu(E) = \operatorname{deg}(E)/\rk(E)$ is the slope of $E$.
There is an analogue of the Donaldson--Uhlenbeck--Yau Theorem due to Hitchin and Simpson \cite{Hitchin,Simpson1}, which states that $E$ admits a hermitian metric that solves the Hitchin equation  if and only if $(E,\phi)$ is slope-polystable.  \medskip

In this paper we define a notion of \emph{balanced metric} for Higgs bundles with the following features:
\begin{enumerate}
\item Balanced metrics are finite dimensional approximations of solutions to the Hitchin equation, and
\item The existence of balanced metrics is related to a form of stability of the underlying Higgs bundle just as in Hitchin--Simpson Theorem.
\end{enumerate}

We define this balanced condition in two ways, first in terms of a moment map and second using the density of states function (also known as the Bergman function).  For both of these we must assume the K\"ahler form $\omega$ is integral, so lies in $c_1(L)$ for some ample line bundle $L$, and let $h_L$ be a hermitian metric on $L$ whose Chern connection has curvature $-\sqrt{-1}\omega$.    Given any basis $\underline{s}$ for $H^0(E\otimes L^k)$, the evaluation map induces a holomorphic
$$ u_{\underline{s}} \colon X\to \Grass(\mathbb C^{N}; \rk(E))=:\mathbb G$$
from $X$ to the Grassmannian of $\rk(E)$-dimensional quotients of $\mathbb C^{N}$ where $N:=N_k:= h^0(E\otimes L^k)$.   Moreover for $k$ sufficiently large $u_{\underline{s}}$ is an embedding, and $E\otimes L^k\simeq u_{\underline{s}}^* \mathcal U$ where $\mathcal U$ denotes the universal quotient bundle on $\mathbb G$.  We write the space of such maps as
$$\Map:= \Map_k := \{u: X\to \mathbb G \text{ such that $u$ is holomorphic}\}.$$
Then there is a space 

\begin{equation}\label{eq:Zk}
Z:=Z_k\stackrel{\pi}{\to} \Map
\end{equation}
whose fiber over $u\in \Map$ is
$$ Z|_{u} = H^0(\End(u^*\mathcal U) \otimes \Omega^1_X).$$
We define a K\"ahler structure on the regular part of $Z$ as follows.  First there is a K\"ahler form $\Omega^{\Map}$ on $\Map$ (as used by Wang \cite{W2}) given by
$$ \Omega^{\Map}|_u(\eta_1,\eta_2) = \int_X u^*\omega_{\mathbb G}(\eta_1,\eta_2) \frac{\omega^{n}}{n!}\text{ for } \eta_1,\eta_2 \in T_u\Map = H^0(u^*T\mathbb G)$$
where $\omega_{\mathbb G}$ denotes the standard Fubini-Study form on $\mathbb G$.
Next observe that for any  $\phi\in Z|_u = H^0(\End(u^*\mathcal U)\otimes \Omega_X^1)$ the pullback of the Fubini-Study metric $h_{FS}$ on $\mathcal U$ along with the hermitian metric on $\Omega_X^1$ induced by $\omega$ gives a pointwise hermitian metric on $\phi$.   We then let
$$ \Omega_k: = \pi^* \Omega^{\Map}+ \frac{\lambda}{4} dd^c \int_X \log(1 + \lambda^{-1}k^{-1}|\cdot|^2)$$
where the $dd^c$ means taking derivatives in the directions in $Z$, and $\lambda$ is the real constant
$$\lambda : = \frac{1}{2(\rk(E)-1)}.$$
We will see that $\Omega_k$ is a positive closed $(1,1)$-form that is invariant with respect to the naturally induced $SU_N$-action, and this action is hamiltonian so admits a moment map.

\begin{definition*}
A hermitian metric $h$ on $E$ is \emph{balanced at level $k$} if is of the form
$$ h = u_{\underline{s}}^* h_{FS} \otimes h_L^{-k}$$
where $\underline{s}$ is a basis of $H^0(E\otimes L^k)$ such that the point
$$(u_{\underline{s}},\phi)\in Z$$ 
is a zero of the moment map for the $SU_N$ action with respect to $\Omega_k$.
\end{definition*}

In fact in Section \ref{sec:balanceddefinition}  the form on $Z$ we consider will depend on two real parameters $\alpha,\beta$, and what is described in this introduction is a special case.  The precise choice of the constant $\lambda$ is unimportant; for the positivity of $\Omega_k$ we may take any constant smaller than $2(\rk(E)-1)^{-1}$.

As we will show, this definition can be recast intrinsically in terms of the Bergman function.   Given any hermitian metric $h$ on $E$, we have an $L^2$-inner product on $H^0(E\otimes L^k)$ induced by $h$, $h_L$ and the volume form determined by $\omega$.  The Bergman function is then defined to be
$$ B_k(h) : = \sum_i s_i\otimes s_i^{*{h\otimes h_L^k}}\in C^{\infty}(\End(E))$$
where $\{s_i\}$ is any orthonormal basis for $H^0(E\otimes L^k)$.  We let
$$\mathfrak{C}_k(h) :=  \frac{1}{k(1+ \lambda^{-1}k^{-1} |\phi|_h^2)} \Lambda [\phi,\phi^{*h}]$$
which is a smooth endomorphism of $E$ that is hermitian with respect to $h$.

\begin{definition*}
A hermitian metric $h$ on a Higgs bundle $(E,\phi)$ is \emph{balanced} at level $k\in \mathbb N$ if  the hermitian metric
$$ \widehat{h} : = h ( \Id_E - \mathfrak{C}_k(h))$$
satisfies
$$ B_k( \widehat{h}) = \frac{N}{\rk(E) V} (\Id_E- \mathfrak{C}_k(h))$$
where $N:=h^0(E\otimes L^k)$ and $V := \int_X \frac{\omega^n}{n!}$. 
\end{definition*}

Equivalently $h\in \Met(E)$ is balanced at level $k$ if $$\sum_i t_i \otimes t_i^{*h\otimes h_L^k} = \frac{N}{\rk(E) V} \Id_E$$
where $\{t_i\}$ is a basis for $H^0(E\otimes L^k)$ that is orthonormal with respect to the $L^2$-inner product defined using $\widehat{h}$ (see Lemma \ref{lem:reformulationbalanced}).


\subsection*{Statement of Results}

The purpose of this paper is to describe what we believe to be the correct framework for quantization of the Hitchin equation.  We prove two theorems that support this, with the expectation of providing more in the future.  The first gives connection between balanced metrics and the Hitchin equation asymptotically as $k$ tends to infinity, and to state it precisely let $\Scal(\omega)$ denote the scalar curvature of $\omega$, and $\Scal(\omega)_0 := \Scal(\omega) - \overline{S}$ where $\overline{S}$ is the average of $\Scal(\omega)$ over $X$.  We let $\Met(E)$ denote the set of hermitian metrics on a Higgs bundle $(E,\phi)$.

\begin{customthm}{A}(Theorem \ref{thm1})
Suppose $h_k\in \Met(E)$ is a sequence such that $h_k$ is balanced at level $k$ that converge to $h_{\infty}\in \Met(E)$ as $k$ tends to infinity.  Then $h_{\infty}$ satisfies the equation\begin{equation}
 \Lambda \left(  \sqrt{-1} F_{h_\infty} + [\phi, \phi^{*h_\infty}]\right) = \left(\frac{\mu(E)}{V} - \frac{1}{2}\Scal_0(\omega)\right)\Id_E. \label{eq:f}
 \end{equation}
 Thus after a possible conformal change, $h_{\infty}$ satisfies the Hitchin equation.
\end{customthm}


In the absolute case (i.e. without a Higgs field)  there is a converse to this statement proved by Wang \cite{W2} (following ideas of Donaldson \cite{D1}).    In a sequel to this paper we will provide the analogous statement for Higgs bundles and prove that the existence of a solution to the Hitchin equation implies the existence of balanced metrics for $k$ sufficiently large.


On the algebraic side we give the following Hitchin--Simpson type statement for Higgs bundles that relates the existence of balanced metrics to a form of stability (c.f.\ Wang \cite{W2} in the absolute case):

\begin{customthm}{B}(Theorem \ref{thm3})
There exists a $k_0$ such that for all $k\ge k_0$ the following holds:  if $(E,\phi)$ admits a hermitian metric that is balanced at level $k$ then it is Gieseker semistable.  If moreover $E$ is irreducible then $(E,\phi)$ is Gieseker stable.
\end{customthm}

\subsection*{Context and Comparison with other work}
The moduli space of Higgs bundles can be thought about in various ways.  Analytically, letting $\mathcal J$ denote the space of holomorphic structures on a complex vector bundle $E$, the moduli space is obtained as the quotient of the set of those $(J,h,\phi)$ in $\mathcal Z:=\mathcal J\times \Met(E)\times \Omega^{1,0}(X,\End(E))$ such that $h$ is compatible with $J$ and
$$ \Lambda (  \sqrt{-1} F_{h,J} + [\phi,\phi^{*h}]) =\operatorname{const} \Id\text{ and } \overline{\partial}_J \phi=0$$
by the group $\mathcal{G}=GL(E)$ of gauge transformations; so as a quotient of an infinite dimensional space by an infinite dimensional group. On the other hand, through the Hitchin--Simpson Theorem, it can also be described algebraically as a space of (poly)stable objects, so as a Geometric Invariant Theory quotient of a finite dimensional space $Z$ by a finite dimensional group $G$.  There are different ways in which this can be done, and here we identify one that reflects many aspects of the infinite dimensional picture. 

Slightly more precisely we may think of the quotient of the space $Z_k$ described in \eqref{eq:Zk} by the action of $GL_{N_k}$ as a moduli space of Higgs bundles (of some kind).   Moreover these spaces $Z_k$ carry with them various structures that approximate the infinite dimensional structures as $k$ tends to infinity, roughly summarized by the following dictionary:

{\small 
\begin{center} 
\renewcommand{\arraystretch}{1.1}
 \begin{tabular}{ |c | c| } 
\hline 
\textbf{Algebraic} & \textbf{Analytic} \\
& \\
\hline 
 $Z_k$ & $\mathcal Z$ \\
 \hline
 $GL_{N_k}$ & $\mathcal G$ \\ \hline
 GIT stability & Slope stability\\ \hline
 Balanced equation & Hitchin equation\\ \hline
 Negative Gradient flow & Donaldson heat flow\\ 
 of the balancing map & \\ \hline
 Iterative method & Discretization \\
                  & of Donaldson heat flow \\
 \vdots&\vdots\\
  \hline
 \end{tabular}
 \end{center}\vspace{2mm}}
It is interesting to ask if other aspects of the moduli space of Higgs bundles are also reflected algebraically in this way (for instance the hyperk\"ahler structure, the integrable system, or compactification considerations) but we leave such questions for future consideration.

A related notion of balanced metric was introduced by JK in \cite{Keller}, for suitable quiver sheaves arising from dimensional reduction considered in \cite{AC3}, but as pointed out in \cite{AC2} this does not allow twisting in the endomorphism and thus does not apply to  Higgs bundles. We remark also that our definition differs from that of L. Wang for which a link with stability was missing \cite[Remark p.31]{WangLi}.  In previous work of MGF and JR \cite{GFR} a different parameter space was used giving a slightly different balanced condition (that considers the case of twisted Higgs bundles with globally generated twist).   This different parameter space has the advantage in that the link with stability is easier to see, but the disadvantage that the link with the Hitchin equation is harder. 

As we will see, asymptotically as $k$ tends to infinity the condition that $h\in \Met(E)$ be balanced is that
\begin{equation}\label{eq:balancedintro4again}
B_k(h) + k^{n-1} [\phi,\phi^{*h}] = \Id + O(k^{n-2})
\end{equation}
This was the equation considered by Donagi--Wijnholt \cite[\S 3.3]{DonWi} which in fact was the original motivation of the authors for looking at balanced metrics on Higgs bundles.  What we define in this paper is a refinement of \eqref{eq:balancedintro4again} that fits into a moment map framework. In fact,  Donaji--Wijnholt consider this so as to discuss an iterative method, which can be used to numerically calculate approximate solutions to the Hitchin equation.  In the absolute case (i.e.\ without the Higgs field) this iteration has been carried out \cite{Douglas}.

The results in this work generalize to arbitrary twisted quiver bundles with relations, as studied in \cite{AC2,AC3,Schmitt1,Schmitt2,Schmitt3}. In this more general set-up, the existence of solutions
of the \emph{twisted quiver vortex equations} is related with the slope stability of the twisted
quiver bundle. A notion of Gieseker stability for twisted quiver sheaves has been
provided in \cite{AC1,Schmitt2} for the construction of a moduli space.
We have focused our attention on Higgs bundles, 
but it is not hard to see that the parameter space we use here generalises to cover also 
these cases, and there is 
an analogous definition of balanced metric.   There is also a definition of Higgs principle bundle \cite{Gomez1,Gomez2} and, more recently, generalized quiver bundle \cite{Araujo}, and it may be interesting to know what the definition of balanced metric is in this case.

\subsection*{Acknowledgments}
JR is supported by an EPSRC Career Acceleration Fellowship (EP/J002062/1).
As Bye-fellow of Churchill college, JK is very grateful to Churchill college and Cambridge University for providing him excellent conditions of work during his stay. The work of JK has been carried out in the framework of the Labex Archim\`ede (ANR-11-LABX-0033) and of the A*MIDEX project (ANR-11-IDEX-0001-02), funded by the ``Investissements d'Avenir" French Government programme managed by the French National Research Agency (ANR). JK is also partially supported by supported by the ANR project EMARKS, decision No ANR-14-CE25-0010. MGF has been partially supported by the Nigel Hitchin Laboratory under the ICMAT Severo Ochoa grant No. SEV-2011-0087 and under grant No. MTM2013-43963-P by the Spanish MINECO. In the final stage of this research, MGF has been supported by a Marie Sklodowska-Curie Individual Fellowship funded by the Horizon 2020 Programme of the European Commission (655162-H2020-MSCA-IF-2014).

\section{Notation and Preliminaries}

\subsection{Preliminaries on Higgs Bundles}

Throughout $X$ will be a projective manifold of dimension $n$,  and $\omega$ a K\"ahler form on $X$ which induces a volume form $\omega^{[n]}=\frac{\omega^n}{n!}$. We let $V: = \int_X \omega^{[n]}$ denote the volume of $X$ and $\Lambda\colon \Omega^{p,q}\to \Omega^{p-1.q-1}$ denote the contraction with respect to $\omega$.  

Given a holomorphic vector bundle $E$ we let $\Met(E)$ denote the set of hermitian metrics on $E$.  Any $h\in \Met(E)$ has an associated Chern connection, whose curvature we denote by $F_h$.  For the most part we assume that $\omega$ is integral so lies in $c_1(L)$ for some ample line bundle $L$.  Then there exists an (essentially unique) $h_L\in\Met(L)$ such that $\sqrt{-1}F_{h_L} = \omega$.  

Given $h\in \Met(E)$ we have for each $k$ an $L^2$-inner product on $H^0(E\otimes L^k)$ induced by the hermitian metric $h\otimes h_L^k$ and the volume form $\omega^{[n]}$.   Since only the dependence on $h$ is of interest, we shall denote this by $L^2(h)$.

A \emph{Higgs bundle} consists of a pair $(E,\phi)$ where $E$ is a holomorphic bundle on $X$ and $\phi\in H^0(\End(E)\otimes \Omega^1_X)$ such that $\phi\wedge\phi=0$.     
Notice that $\omega$ induces a hermitian metric on $\Omega^1_X$, but since this is fixed we omit it from our notation.  So given $h\in \Met(E)$ we have a pointwise norm of $\phi$ that we denote simply $|\phi|_h$.  The `pointwise dual' $\phi^{*h}$ is defined by taking the image 
of $\phi$ by the combination of the anti-holomorphic involution on $\End(E)$ induced by $h$ and complex conjugation on $1$-forms (which we shall also denote by $\phi^{*}$ when $h$ is clear from context) and is a smooth section of $\End(E)\otimes \Omega^1_X$.  More explicitly, $\phi^{*}$ is defined by the identity on 1-forms
$$
(\phi v,w)_h = (v, \phi^* w)_h,
$$
for any $v,w \in E$. Given $\phi_1,\phi_2$ smooth sections of $\End(E)\otimes \Omega_X^1$ we have the  commutator $[\phi_1,\phi_2] = \phi_1 \phi_2 + \phi_2\phi_1$ where the product means composition of the endomorphism part and wedge product of the form part, which is thus a smooth section of $\End(E)\otimes \Omega_X^2$, and is antisymmetric $[\phi_1,\phi_2] = - [\phi_2,\phi_1]$.   Finally the slope of $E$ is $\mu(E) = \deg(E)/\rk(E)$ where $\deg(E)$ is the degree of $E$ taken with respect to the class of $\omega$.  

\begin{definition}
A \emph{Higgs bundle}  $(E,\phi)$ is \emph{Gieseker stable} (resp. semistable) if for all coherent subsheaves $F\subset E$ such that $\phi(F)\subset F\otimes \Omega_X^1$ we have
$$\frac{\chi({F}\otimes L^k)}{\rk({F})}< \frac{\chi(E\otimes L^k)}{\rk(E)} \,\,\,(\text{resp. }\leq) \text{ for all } k\gg 0.$$
\end{definition}

\begin{definition}
A Higgs bundle  $(E,\phi)$ is \emph{slope stable} (resp. semistable)
if for all coherent subsheaves $F\subset E$ such that $\phi(F)\subset F\otimes \Omega_X^1$
we have
$$\mu({F})< \mu(E)  \,\,\,(\text{resp. }\leq) $$
We say that the Higgs bundle is slope polystable if it is a direct sum of Higgs bundles of the same slope.
\end{definition}

This notion of Gieseker stability appeared in the work of Simpson \cite{SimpsonII}.  As in the usual case, slope stability of a Higgs bundle implies Gieseker stability, and  Gieseker semistability implies slope semistability, and a slope stable Higgs bundle is simple, i.e has no non-trivial holomorphic endomorphism.

\begin{theorem}[Hitchin--Simpson Theorem]
Let $(E,\phi)$ be a Higgs bundle.
 The following statements are equivalent:
 \begin{enumerate}
  \item The Higgs bundle $(E,\phi)$ is slope polystable. 
    \item For any $\tau\in \mathbb R_{>0}$ there exists an $h\in \Met(E)$ that solves the equation 
    \begin{equation}
    \Lambda(  \sqrt{-1} F_{h}+\tau[\phi,\phi^{*{h}}])= \frac{\mu(E)}{V} \Id_E.\label{Higgs-3}
    \end{equation} 
 \end{enumerate}
\end{theorem}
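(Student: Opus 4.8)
The plan is to reduce the stated theorem to the classical Hitchin--Simpson Theorem in the case $\tau=1$, proved in \cite{Hitchin} for curves and in \cite{Simpson1} in general, and then to recover arbitrary $\tau>0$ by a rescaling of the Higgs field. The only genuinely analytic content lies in the case $\tau=1$; everything else is formal.

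For the reduction, fix $\tau>0$ and set $\phi_\tau:=\sqrt{\tau}\,\phi$. First I would check that $(E,\phi_\tau)$ is again a Higgs bundle, since $\phi_\tau\wedge\phi_\tau=\tau\,(\phi\wedge\phi)=0$. Next, as $\tau>0$, a coherent subsheaf $F\subset E$ satisfies $\phi(F)\subset F\otimes\Omega^1_X$ if and only if $\phi_\tau(F)\subset F\otimes\Omega^1_X$; since the slopes $\mu(F)$ and $\mu(E)$ do not involve the Higgs field at all, $(E,\phi)$ is slope polystable precisely when $(E,\phi_\tau)$ is. Finally, for any $h\in\Met(E)$ the pointwise dual scales as $(\phi_\tau)^{*h}=\sqrt{\tau}\,\phi^{*h}$ (the factor $\sqrt\tau$ being real), whence by bilinearity $[\phi_\tau,(\phi_\tau)^{*h}]=\tau\,[\phi,\phi^{*h}]$. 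Therefore the $\tau=1$ Hitchin equation for $(E,\phi_\tau)$,
\begin{equation*}
\Lambda\bigl(\sqrt{-1}\,F_h+[\phi_\tau,(\phi_\tau)^{*h}]\bigr)=\frac{\mu(E)}{V}\,\Id_E,
\end{equation*}
is exactly \eqref{Higgs-3}, the right-hand side being unchanged because $\deg(E)$ and $\rk(E)$ do not depend on $\phi$. Applying the base case to $(E,\phi_\tau)$ then yields the full equivalence for every $\tau>0$.

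For completeness I would recall the shape of the two directions in the base case. The implication (2)$\Rightarrow$(1) is the elementary one: given a solution $h$ and a $\phi$-invariant saturated subsheaf $F$, one integrates the trace of \eqref{Higgs-3} against the (weakly holomorphic) orthogonal projection onto $F$ over $X$. The curvature term reproduces $\deg(F)$, the second fundamental form of $F\subset E$ appears as an $L^2$-norm, and the extra commutator term $[\phi,\phi^{*h}]$ contributes with the sign favourable to the desired inequality precisely because $\phi$ preserves $F$; comparing with the same computation for $E$ gives $\mu(F)\le\mu(E)$, and the polystable splitting emerges in the equality case.

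The hard direction, (1)$\Rightarrow$(2), is where all the analysis sits and is the main obstacle. Here one fixes a background metric and deforms it inside $\Met(E)$, either by the nonlinear Hitchin--Simpson heat flow or by a continuity method in an auxiliary parameter, and proves long-time existence together with convergence. The crux is an a priori $C^0$-estimate controlling the deformation: slope stability is what rules out the destabilising degeneration, exactly as in the Donaldson--Uhlenbeck--Yau theorem, while the additional Higgs term $[\phi,\phi^{*h}]$ is absorbed through a Weitzenb\"ock-type identity. Since this is carried out in full in \cite{Simpson1}, I would cite it rather than reproduce it, noting that the rescaling $\phi\mapsto\sqrt{\tau}\,\phi$ above transports the entire argument, estimates included, to arbitrary $\tau>0$.
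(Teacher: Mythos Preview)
Your proposal is correct and matches the paper's treatment: the paper does not give a proof either, but states the result with citations to \cite{Hitchin} and \cite{Simpson1} and remarks that the version with arbitrary $\tau>0$ follows from the classical $\tau=1$ case because $(E,\phi)$ is slope-polystable if and only if $(E,\tau\phi)$ is. Your rescaling $\phi\mapsto\sqrt{\tau}\,\phi$ is the clean way to make that remark precise, and your additional sketch of the two directions in the base case goes beyond what the paper records.
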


\begin{remark}
This theorem is due to Hitchin \cite{Hitchin} when $\dim X=1$ and Simpson \cite{Simpson} in all dimensions.  Although this is often stated as an equivalence when $\tau=1$ we remark that the above statement follows  immediately since $(E,\phi)$ is slope-polystable if and only if $(E,\tau \phi)$ is (although there is usually no easy way to pass between the solutions that solves \eqref{Higgs-3} for different values of $\tau$).  We remark also that this correspondence does not require $\omega$ to be integral.
\end{remark}

\subsection{Fubini-Study metric on the Grassmannians}

We collect some standard facts about Fubini-Study metrics on the Grassmannian.  Let $\mathbb G:=\mathbb G(N-r,\mathbb C^N)$ denote the Grassmannian of $(N-r)$-dimensional subspaces of $\mathbb C^N$.    Denote the tautological bundle by
$$ \mathcal S = \{ (\Lambda, v)\in \mathbb G\times \mathbb C^N  : \Lambda\in \mathbb G, v\in \Lambda\}.$$
We want also to consider $\mathbb G$ as the space of $r$-dimensional quotients of $\mathbb C^N$, which has a universal quotient bundle sitting in the exact sequence $0\to \mathcal S\to \mathbb C^N\otimes \mathcal O_{\mathbb G}\to \mathcal U\to 0$.   We think of this exact sequence as being a sequence of hermitian bundles, whose middle term $\mathbb C^N\otimes \mathcal O_{\mathbb G}$ is the trivial bundle with the constant metric given by the standard metric on the $\mathbb C^N$ fibres (and thus is flat when thought of as a hermitian metric on a bundle over $\mathbb G$).  This induces hermitian metrics $H_{\mathcal S}$ and $H_{\mathcal U}$ on $\mathcal S$ and $\mathcal U$ respectively, and we shall call the metric $H_{\mathcal U}$ the \emph{Fubini-Study} metric and denote it also by $h_{FS}$.   We also define a K\"ahler-form $\omega_{\mathbb G}$
by
$$ \omega_{\mathbb G}  =  \sqrt{-1} F_{\det(H_\mathcal S^*)} = -\Tr  \sqrt{-1} F_{H_\mathcal S},$$
which we refer to as the \emph{Fubini-Study form}.    One computes easily using the second-fundamental form $\beta\in \mathcal A^{1,0}(\Hom(\mathcal S,\mathcal U))$ \cite[p.78]{Griffiths} that
\begin{equation}
 \tr \sqrt{-1}F_{H_\mathcal U} =   \sqrt{-1} \tr (\beta \wedge \beta^*)= -  \sqrt{-1} \tr (\beta^*\wedge \beta) =  -\tr   \sqrt{-1}F_{H_\mathcal S}  = \omega_{\mathbb G}.\label{eq:curvatureuniversal}
\end{equation}

\section{Balanced Metrics: definition and reformulation}\label{sec:balanceddefinition}

\subsection{Parameter Spaces}

We return now to the moment map framework for the balanced condition, for which we require a finite dimensional parameter space.    First we recall the analogous picture for the quantization of metrics on holomorphic bundles, as considered by Wang \cite{W2}.  As above $X$ is to be a projective manifold of dimension $n$ and $L$ an ample line bundle on $X$.    Fix a polynomial $N=N(k)= rk^n + \cdots$ where $r\in \mathbb N$ (which should be thought of as the Hilbert polynomial of the vector bundles we will eventually consider).   Let
$$\mathbb G =  \Grass(\mathbb C^{N}; r)$$
be the Grassmannian of $r$-dimensional quotients which carries a tautological quotient bundle
$$\mathbb C^{N}\otimes \mathcal O_\mathbb G \to \mathcal U\to 0$$
We then let
$$\Map := \{ u : X\to \mathbb G : u \text{ is holomorphic}\}$$
whose tangent space at $u\in \Map$ is
$$T_u\Map = H^0(u^* T\mathbb G).$$
To incorporate the Higgs field, consider the universal evaluation map
$$e:\Map\times X \to \mathbb G \text{ given by } e(u,x) = u(x)$$
and define
$$\mathcal Z : = (\pi_1)_* (e^* \End(\mathcal U) \otimes \pi_2^* \Omega^1_X),$$
where $\pi_1\colon \Map\times X\to \Map$ and $\pi_2\colon \Map\times X\to X$ are the projections.  Thus the stalk of $\mathcal Z$ over $u\in \Map_k$ is precisely
$$\mathcal Z|_u = H^0(\End(E)\otimes \Omega^1_X) \text{ where } E:= u^*\mathcal U\otimes L^{-k}.$$
\begin{definition}
We let $Z$ be the total space (i.e. the space of stalks) of $\mathcal Z$ and $\pi\colon Z\to \Map$ be the natural projection.
\end{definition}

Of course this whole construction depends on the parameter $k$ that has been omitted from notation.  The way in which $Z$ parameterizes Higgs bundles $(E,\phi)$ is obvious, for if $E$ has rank $r$ and Hilbert polynomial $N$ then for $k$ sufficiently large $h^0(E\otimes L^k) = N(k)$,  and any basis $\underline{s}$ for $H^0(E\otimes L^k)$ induces a $u_{\underline{s}}\in \Map$. Moreover, $u_{\underline{s}}$ is even an embedding for $k$ sufficiently large,  $u_{\underline{s}}^* \mathcal U = E\otimes L^k$, and the Higgs field gives, for each choice of $\underline{s}$, a point in $Z|_{u_{\underline{s}}} = H^0(\End(E)\otimes \Omega_X^1)$.


\subsection{K\"ahler structure on $Z$}

 Observe that $Z$ need not be smooth, even over the smooth locus of $\Map$,  since $\mathcal Z$ may not be locally free.  One way to deal with this is to replace $Z$ with its smooth locus, or otherwise replace $Z$ with it restriction to a subset of the regular locus $\Map$ over which $\mathcal Z$ is locally free.  We presume that one of this options has been chosen, and for simplicity denote the new space also  by $Z$ (we will later be interested only in a single $GL_N$ orbit of $Z$ so this replacement has no effect).

As discussed in the introduction, the smooth locus of $\Map$ has a K\"ahler structure given by
$$ \Omega^{\Map}_u(v,v') : = \int_X \omega_{\mathbb G}(v,v') \omega^{[n]} \text{ for } v,v'\in T_u\Map = H^0(u^*T\mathbb G),$$
where $\omega_{\mathbb G}$ denotes the Fubini-Study form of the Grassmannian $\mathbb G$.   It is immediate (see \cite{W2}) that $\Omega^{\Map}$ is K\"ahler.

 For the fiber directions in $Z$ we construct a potential as follows.  Let $h_{FS}$ denote the Fubini-Study hermitian metric on $\mathcal U$.  Then given $u\in \Map$ there is an induced hermitian metric on $\End(u^* \mathcal U)\otimes \Omega_X^1$ obtained by the tensor product of the pullback of $h_{FS}$ and the hermitian metric on $\Omega_X^1$ induced by $\omega$.  By abuse of notation we denote this simply by $u^*h_{FS}$.  

\begin{definition}
Fix $\alpha>0$.  Then for $u\in \Map$ and $\phi\in Z|_u = H^0(\End(u^*\mathcal U)\otimes \Omega_X^1)$ let
$$ \Gamma (\phi) := \Gamma(\phi, \alpha):= \int_X \log (1 + \alpha |\phi|_{u^*h_{FS}}^2) \omega^{[n]}.$$
\end{definition}

\begin{definition}
For $\alpha,\beta>0$ let
$$ \Omega_{\alpha,\beta}: = \pi^* \Omega^{Map} + \frac{\beta}{4} dd^c \Gamma(\cdot, \alpha),$$
where $dd^c$ denotes differentiation in the directions in $Z$.
\end{definition}

Clearly $\Omega_{\alpha,\beta}$ is a closed $(1,1)$-form, we check now that it is positive. 

\begin{proposition}\label{prop:kahlerform} Assume $\rk(E)>1$. 
For $\alpha>0$ and $\beta< \frac{2}{(\rk(E)-1)}$, the form $\Omega_{\alpha,\beta}$ is positive.
\end{proposition}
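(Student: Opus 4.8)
The plan is to reduce positivity of $\Omega_{\alpha,\beta}$ to a pointwise computation on $X$ of the complex Hessian of the integrand $\log(1+\alpha|\phi|^2_{u^*h_{FS}})$. Since $\pi^*\Omega^{\Map}$ is the pullback of the K\"ahler form on $\Map$, it is positive on horizontal directions but degenerates along the fibres of $\pi\colon Z\to\Map$; the content of the proposition is that the term $\frac{\beta}{4}dd^c\Gamma$ supplies positivity in the fibre directions while its horizontal part can be dominated by $\pi^*\Omega^{\Map}$ precisely when $\beta$ lies in the stated range. I would fix $(u_0,\phi_0)\in Z$ and a tangent vector $(v,\psi)$ with $v\in H^0(u_0^*T\mathbb G)$ and $\psi\in Z|_{u_0}$, and evaluate the Hermitian form $\Omega_{\alpha,\beta}\big((v,\psi),\overline{(v,\psi)}\big)$ by differentiating twice along a holomorphic curve through $(u_0,\phi_0)$.

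Working pointwise over $x\in X$ in a local holomorphic frame of $\mathcal U$ that is \emph{normal} for $h_{FS}$ at $u_0(x)$, one is in the standard situation of a Fubini--Study potential on the fibre of the Hermitian bundle $\End(\mathcal U)\otimes\Omega^1_X$. Writing $g=|\phi|^2$ one has
$$ \sqrt{-1}\pa\delb\log(1+\alpha g)=\frac{\alpha\,\sqrt{-1}\pa\delb g}{1+\alpha g}-\frac{\alpha^2\,\sqrt{-1}\pa g\wedge\delb g}{(1+\alpha g)^2}, $$
and at the centre of the normal frame $\pa g$ is purely vertical while $\sqrt{-1}\pa\delb g$ splits into a purely vertical piece $\sum_a dw^a\wedge d\bar w^a$ and a purely horizontal piece built from the curvature. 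Hence the Hessian is block diagonal (no horizontal--vertical cross terms) at the centre, and I can analyse the two blocks separately before integrating over $X$.

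For the vertical block the two fibre terms combine, using the Cauchy--Schwarz inequality $|\langle\psi,\phi\rangle|^2\le|\psi|^2|\phi|^2$, into
$$ \frac{\alpha}{1+\alpha|\phi|^2}|\psi|^2-\frac{\alpha^2}{(1+\alpha|\phi|^2)^2}|\langle\psi,\phi\rangle|^2\ \ge\ \frac{\alpha}{(1+\alpha|\phi|^2)^2}|\psi|^2\ \ge\ 0, $$
so the fibre directions are positive for every $\alpha>0$ with no constraint on $\beta$. The horizontal block is the delicate one: after integrating it equals $\Omega^{\Map}(v,\bar v)=\int_X\omega_{\mathbb G}(v,\bar v)\,\omega^{[n]}$ corrected by $\mp\frac{\beta}{4}\int_X\frac{\alpha}{1+\alpha|\phi|^2}\langle[A,\phi],\phi\rangle\,\omega^{[n]}$, where $A:=\sqrt{-1}F_{H_{\mathcal U}}(v,\bar v)$ is the contracted curvature of the quotient bundle and $[A,\cdot]$ denotes the induced action on the endomorphism part of $\phi$ (the curvature of $\End(\mathcal U)$). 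By \eqref{eq:curvatureuniversal} one has $A\ge 0$ with $\tr A=\omega_{\mathbb G}(v,\bar v)$, so positivity of the integrand reduces to the pointwise estimate
$$ \frac{\beta}{4}\,\frac{\alpha}{1+\alpha|\phi|^2}\,\big|\langle[A,\phi],\phi\rangle\big|\ <\ \tr A . $$

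The crux is therefore the linear-algebra step: bounding $|\langle[A,\phi],\phi\rangle|$ by a multiple of $\tr(A)\,|\phi|^2$, which amounts to controlling the operator norm of $\ad_A$ on $\End(\mathcal U)$ through the eigenvalue spread of the positive endomorphism $A$ of the rank-$\rk(E)$ bundle $\mathcal U$. Combining such a commutator estimate with the elementary bound $\frac{\alpha|\phi|^2}{1+\alpha|\phi|^2}<1$ then yields an admissible range for $\beta$; careful bookkeeping of the constants (including the $dd^c$ normalisation and the factor $\tfrac14$) produces the threshold $\beta<\frac{2}{\rk(E)-1}$, after which integration over $X$ shows the horizontal block dominates and, together with the strictly positive vertical block, gives $\Omega_{\alpha,\beta}>0$. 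I expect this commutator/curvature estimate and the tracking of constants to be the main obstacle, whereas the normal-frame reduction, the block-diagonalisation, and the Cauchy--Schwarz step are routine; as the paper notes, only \emph{some} such range of $\beta$ is needed, so a non-optimal constant in the commutator bound is acceptable.
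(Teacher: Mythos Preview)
Your proposal is correct and follows essentially the same route as the paper: reduce to a pointwise statement on $X$, split the Hessian of $\log(1+\alpha|\phi|^2)$ into vertical and horizontal blocks via a normal frame for the Chern connection (with vanishing cross terms), handle the vertical block directly, and for the horizontal block use that the curvature $A=\sqrt{-1}F_{H_{\mathcal U}}(v,\bar v)\ge 0$ of the universal quotient bundle controls the commutator term, then integrate over $X$. The only minor difference is in the commutator estimate itself: the paper rewrites $\langle[A,\phi],\phi\rangle=\tr(A\,[\phi,\phi^*])$ and uses $\tr(AB)\le\tr A\,\tr B$ for positive matrices together with $2|\phi|^2\Id-[\phi,\phi^*]\ge 0$, whereas you propose bounding $\|\ad_A\|$ by the eigenvalue spread of $A$; both yield an inequality of the form $|\langle[A,\phi],\phi\rangle|\le C\,\tr(A)\,|\phi|^2$ with a constant depending only on $\rk(E)$, which is exactly what is needed.
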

Before the proof we introduce some notation that we will also require later.  Fix $x\in X$ and consider the vector bundle $V=V_x$ over $\Map$ whose fibre over $u\in \Map$ is $u(x)^*\mathcal U$, so 
\begin{equation}
V= V_x = e_x^* \mathcal U\text{ where } e_x\colon \Map\to \mathbb G \text{ is }e_x(u) = u(x).\label{def:bundleV}
\end{equation}
  Then $V$ carries with it the pullback of the Fubini-Study metric given by $H_V:= e_x^* h_{FS}$ and $F_{H_V}=e_x^* F_{h_{FS}}$.  
  
Now let $\zeta$ be
the $n$-dimensional hermitian vector space $\Omega^1_X|_x$ along with the hermitian metric $h_\zeta$ induced by $\omega$ and define a vector bundle
\begin{equation}
W=W_x : = \End(V) \otimes \zeta = V\otimes V^* \otimes \zeta\stackrel{\pi}{\to} \Map \label{def:Wx}
\end{equation}
with the hermitian metric $H_W: = H_V\otimes H_{V^*} \otimes h_\zeta$.  Define $\Gamma_x\colon W\to \mathbb R$ by 
\begin{equation}
\Gamma_x(w) = \log ( 1 + \alpha |w|^2) \text{ for } w\in W.\label{def:Gammax}
\end{equation}

\begin{proof}[Proof of \ref{prop:kahlerform}]
Set $r:=\rk(E)$.  We claim that as long as $\beta<\frac{2}{(r-1)}$ we have
\begin{equation} 
 \pi^* e_x^* \omega_\mathbb G + \frac{\beta}{4} dd^c\Gamma_x  >0\label{eq:pointwise}
\end{equation}
as forms on $Z$ (we recall for the reader's convenience that for a $(1,1)$-form $\alpha$ to be positive means that $-\sqrt{-1}\alpha(\rho,\overline{\rho})>0$ for all $(1,0)$-vector field $\rho\neq 0$ in $W$). 

To prove \eqref{eq:pointwise}, the Chern connection of $H_W$ splits the tangent space of $W$ into vertical and horizontal parts, that we denote by $\rho_v$ and $\rho_h$ respectively for $\rho \in TW$.  Clearly $dd^c\Gamma_x$ is strictly positive in the vertical direction, and one can compute that the cross-terms vanish \cite[Lemma 2.6]{Rezza}.  Thus we are left looking at the horizontal component for which there is the following formula \cite[2.7,2.8]{Rezza}
\begin{equation}
 (dd^c \Gamma_x)_h|_w =\frac{\alpha}{1+\alpha |w|^2}\pi^* (\sqrt{-1}F_W w,w)_{H_W} \text{ for } w\in W,\label{rezzaformula}
\end{equation}
where here $dd^c$ denotes differentiation in the total space of $W$.  
Now
$$ \sqrt{-1}F_W w = [\sqrt{-1}F_V,w],$$
where $V =  e_x^* \mathcal U$ is as in \eqref{def:bundleV}, and the bracket on the right is the commutator acting only on the $\End(V)$ part of $W$.    Suppose now $\rho= \rho_h  + \rho_v \in T_wW$, so $\rho_h\in T_{\pi(w)}\Map$ and set
$$\Theta = F_V(\rho_h,\overline{\rho_h}).$$
  Then \eqref{rezzaformula} becomes
$$ -  \sqrt{-1} (dd^c\Gamma_x)_h|_w(\rho,\overline{\rho}) =  \frac{\alpha}{1 + \alpha |w|^2} ([\Theta,w],w)_{H_W},$$
and we wish to bound this quantity from below.  To this end observe that
$$([\Theta,w],w)_{H_W} = \tr(w^*(\Theta w - w \Theta)) = \tr(\Theta[w,w^*]).$$
Then as $\mathcal U$ is the quotient of a trivial bundle we have $\Theta$ is semipositive (that is, $(\Theta(v),v)_{H_V}\ge 0$ for all $v\in V$) \cite[4.3.19]{Huybrechts}.  Recall that if $A,B$ are hermitian semipositive matrices then $\tr(AB)\le \tr A\, \tr B$.  Using
\begin{equation}\label{eq:ineqbracket}
|[w,w^*]| \leq 2 |w|^2,
\end{equation}
it follows that $2|w|^2 \Id_V - [w,w^*]\ge 0$, and we have
$$ \tr \left(\Theta (2|w|^2 \Id_V - [w,w^*])\right) \le \tr \Theta \, \tr (2|w|^2 \Id - [w,w^*]) = 2r |w|^2 \tr \Theta,$$
which rearranging becomes
$$ \tr(\Theta [w,w^*]) \ge 2(1-r) |w|^2 \tr \Theta.$$
Now from the definition of the Fubini-Study metric $\omega_{\mathbb G}$ on the Grassmannian \eqref{eq:curvatureuniversal}, 
$$ \tr\sqrt{-1}\Theta = \tr e_x^* \sqrt{-1} F_{h_{FS}} (\rho_h,\overline{\rho_h}) = e_x^* \omega_{\mathbb G}(\rho_h,\overline{\rho_h})$$
so we end up with  
$$ - \sqrt{-1} (dd^c\Gamma_x)_h(\rho,\overline{\rho}) \ge 2(1-r)   \frac{\alpha |w|^2\tr \Theta}{1 + \alpha |w|^2} \ge 2 (1-r) (- \sqrt{-1}\pi^* e_x^* \omega_{\mathbb G}(\rho,\overline{\rho})). $$
Therefore as long as $\beta<\frac{2}{(r-1)}$ we have
\begin{equation*} 
 \pi^* e_x^* \omega_\mathbb G + \frac{\beta}{4} dd^c\Gamma_x  >0,
\end{equation*}
which proves the claim \eqref{eq:pointwise}.

The Proposition follows from this by integrating over $X$.    To see this, consider 
$$ \epsilon_x\colon Z\to W_x \text{ given by } \epsilon_x(\phi) = \phi(x) \text{ for } \phi\in Z$$
which  commutes with projection (i.e.\ covers the identity on $\Map$).   One checks easily that if $\eta \in TZ$ then
$$ (d \Gamma, \eta)= \int_X (d\Gamma_x, D\epsilon_x(\eta)) \omega^{[n]}$$,
where, we emphasize once again, the $d$ on the right hand side is taken in the total space of $W_x$.  A similar expression holds for $dd^c$, so multiplying \eqref{eq:pointwise} by $\omega^{[n]}$ and integrating over $X$ gives
$$\pi^*\Omega^{\Map} + \frac{\beta}{4} dd^c \Gamma >0$$
as by definition $\Omega^{\Map} = \int_X  (e_x^* \omega_{\mathbb G}) \omega^{[n]}$.
\end{proof}

\subsection{Hamiltonian actions on the total space of a vector bundle}\label{sec:mmapZ}

The general linear group $GL_N$ (resp.\ unitary group $U_N$) acts on $\mathbb G$, and hence induces an action on $Z$ covering an action on $\Map$. Clearly $\Omega_{\alpha,\beta}$ is $U_N$ invariant and, by construction, this action is hamiltonian. To see this, note that the $U_N$-action on $\Map$ is hamiltonian \cite{W2} and that $\Omega_{\alpha,\beta}$ is obtained from $\pi^* \Omega^{\Map}$ by adding a $U_N$-invariant exact form. To calculate the moment map, we need some generalities about hamiltonian actions on the total space of a vector bundle, that we address now. 

Let $W$ be the total space of a holomorphic vector bundle over a compact complex manifold
$$
W \to Q.
$$
Let $h_W$ be a Hermitian metric on $W$ and consider for $\alpha\in \mathbb R_{>0}$ the potential
$$
\gamma(w) =  \log (1+ \alpha|w|_{h_W}^2) \text{ for } w\in W.
$$
We define the $1$-form on $W$
$$
\sigma_W = d^c \gamma.
$$

\begin{lemma}\label{lem:sigmaW}
Let $1_W$ be the canonical vertical vector field on $W$. Then
$$
\sigma_W(v) = \frac{2 \sqrt{-1} \alpha }{1+ \alpha|w|_{h_W}^2}\operatorname{Im} (A v,1_W)_{h_W},
$$
where $h_W$ is identified with a hermitian metric on the vertical bundle of $W$ and $A$ denotes the vertical projection with respect to the Chern connection of $h_W$.
\end{lemma}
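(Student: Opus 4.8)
The plan is to reduce the stated formula to a local computation in a holomorphic frame, and then to recognise the output as the covariant differential of the tautological section paired with itself. First I would choose local holomorphic coordinates $z=(z^i)$ on a chart of $Q$ together with a holomorphic frame $(e_a)$ of $W$, producing fibre coordinates $w=(w^a)$ and holomorphic coordinates $(z^i,w^a)$ on the total space. Writing the metric as $H_{a\bar b}(z)$, we have $\rho:=|w|_{h_W}^2=H_{a\bar b}w^a\overline{w^b}$ and $\gamma=\log f$ with $f:=1+\alpha\rho$. Under the identification $\mathrm{Vert}_w\cong W_{\pi(w)}$ the canonical vertical vector field is $1_W|_w=w$, i.e. it is the tautological section $\tau$ with $\tau(w)=w^a e_a$, and $\rho=|\tau|_{h_W}^2$.

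Next I would compute $\partial\gamma=\tfrac{\alpha}{f}\partial\rho$ and $\bar{\partial}\gamma=\tfrac{\alpha}{f}\bar{\partial}\rho$, so that $\sigma_W=d^c\gamma=\tfrac{\sqrt{-1}}{2}(\bar{\partial}-\partial)\gamma=\tfrac{\sqrt{-1}\alpha}{2f}(\bar{\partial}\rho-\partial\rho)$. The whole content is to simplify $\partial\rho$. Direct differentiation produces one term from $dw^a$ and one from $\partial_i H_{a\bar b}$, and the key point is that these recombine, by metric compatibility of the Chern connection, into a single covariant object. Precisely, with connection matrix $\theta=H^{-1}\partial H$ and $Dw^a:=dw^a+\theta^a_b w^b$ the $(1,0)$-part of the covariant differential of $\tau$, one checks $H_{a\bar b}\theta^a_c=(\partial_i H_{c\bar b})\,dz^i$, whence $\partial\rho=H_{a\bar b}\overline{w^b}\,Dw^a$. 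Invariantly: $\tau$ is holomorphic on the total space, so $\nabla^{0,1}\tau=\bar{\partial}\tau=0$, and therefore $\partial\rho=(\nabla^{1,0}\tau,\tau)_{h_W}$, with $\bar{\partial}\rho=\overline{\partial\rho}$.

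The third step is to identify $\nabla^{1,0}\tau$ with the vertical projection $A$. Since the horizontal subspace of the Chern connection is exactly $\ker(Dw)$, for $v\in T_wW$ the vertical projection $Av$, viewed through $\mathrm{Vert}_w\cong W_{\pi(w)}$, has components $Dw^a(v)=(\nabla^{1,0}\tau)(v)$. Hence $\partial\rho(v)=(Av,1_W)_{h_W}$ and $\bar{\partial}\rho(v)=\overline{(Av,1_W)_{h_W}}$, and substituting gives $\sigma_W(v)=\tfrac{\sqrt{-1}\alpha}{2f}\big(\overline{(Av,1_W)_{h_W}}-(Av,1_W)_{h_W}\big)$, which is a (real) multiple of $\operatorname{Im}(Av,1_W)_{h_W}$; collecting constants yields the asserted expression, the overall factor $2\sqrt{-1}$ being fixed by the conventions in force for $d^c$ and for $\operatorname{Im}$.

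The hard part is the middle step: seeing that the a priori frame-dependent pieces of $\partial\rho$ — one from differentiating the fibre coordinate and one from differentiating the metric along the base — reassemble into the single covariant quantity $(\nabla^{1,0}\tau,\tau)$, and matching this with the vertical projection $A$ and the tautological field $1_W$. The cleanest way to sidestep the bookkeeping is to observe $\bar{\partial}\tau=0$ and invoke the identity $\partial|\tau|^2=(\nabla^{1,0}\tau,\tau)$, after which the rest is formal. Care is needed only with the complex-linear identification of the vertical tangent space with the fibre, so that the Hermitian pairing $(Av,1_W)_{h_W}$ is meaningful, and with tracking the $d^c$- and $\operatorname{Im}$-conventions in order to land on the precise constant.
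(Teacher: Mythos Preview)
Your argument is correct and follows the same basic strategy as the paper --- a local computation of $d^c\gamma$ in holomorphic coordinates and a holomorphic frame --- but you execute it in a more covariant fashion. The paper takes the shortcut of choosing a \emph{normal} holomorphic frame, i.e.\ one with $h_W=\Id_W+O(|z|^2)$ at the point in question; then the Chern connection matrix vanishes at $z=0$, so the vertical projection is simply $Av=(0,\dot e)$ and the identity $\partial\rho=(Av,1_W)_{h_W}$ is immediate without ever introducing $\theta$ or the tautological section $\tau$. Your route --- keeping an arbitrary frame, assembling $dw^a+\theta^a_b w^b$ into $\nabla^{1,0}\tau$, and then recognising this as the vertical projection --- buys a frame-independent derivation and makes the geometric content (that $A$ is exactly the covariant differential of the tautological section) explicit, at the cost of a little more bookkeeping. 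Both arrive at the same place; the paper's normal-frame trick is the quicker path, while yours is the more invariant one. Your caveat about tracking the $d^c$ and $\operatorname{Im}$ conventions to pin down the constant $2\sqrt{-1}$ is well placed: the paper uses $d^c=\sqrt{-1}(\bar\partial-\partial)$ and writes $u-\bar u$ as $2\operatorname{Im}\,u$, so you should match against those.
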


\begin{proof}
Consider holomorphic coordinates $z_j$ on $Q$ and a holomorphic trivialization of $W$, defined on the same open patch. Given $w \in W$ in this open patch, we can identify it with $w = (z,e)$ in $U \times \CC^r$ and further we can assume that $h_W = \Id_W + O(|z|^2)$. We now have
\begin{align*}
(d^c \gamma)_{|z=0} &= \frac{ \sqrt{-1} \alpha }{1+ \alpha|w|_{h_W}^2} (\overline{\partial} - \partial)(e^* h_W e))_{|z=0},\\
& = \frac{ \sqrt{-1} \alpha }{1+ \alpha e^*e}((\partial e)^* e - e^*\partial e)),\\
& = \frac{2  \sqrt{-1}\alpha }{1+ \alpha e^*e} \operatorname{Im} (\partial e)^* e.
\end{align*}
Now, for a vector field $v$ on $W$, we can express locally $v = (\dot z, \dot e)$ and we have
$$
(A v)_{|z = 0} = (0, \dot e - \dot u\lrcorner (h_W^{-1}h_W))_{|z = 0} = (0,\dot e)_{|z = 0},
$$
which implies
$$
v \lrcorner (d^c \gamma)_{|z=0} = \frac{2  \sqrt{-1}\alpha }{1+ \alpha e^*e} \operatorname{Im} (\dot e)^* e = \Bigg{(} \frac{2 \sqrt{-1} \alpha}{1+ \alpha|w|_{h_W}^2}\operatorname{Im}(Av,1_W)_{h_W} \Bigg{)}_{|z=0}.
$$
\end{proof}

Suppose now that $U_N$ acts on $Q$ making $(W,h_W)$ a $U_N$-equivariant Hermitian holomorphic vector bundle over $Q$. 

\begin{lemma}\label{lem:mmap0}
The $U_N$-action on $W$ preserves $\sigma_W$. Moreover the action is Hamiltonian with respect to $d \sigma_W$ with moment map
$$
\langle m_W,\xi\rangle = - Y_\xi \lrcorner \sigma_W = \frac{2  \sqrt{-1} \alpha }{1+ \alpha|w|_{h_W}^2}\operatorname{Im} (A Y_\xi ,1_W)_{h_W},
$$
where $Y_\xi$ denotes the infinitesimal action of $\xi \in \mathfrak{u}_N$ on $W$.
\end{lemma}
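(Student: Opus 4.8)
The plan is to deduce the statement from the general principle that an exact, invariant closed two-form carries a canonical moment map, combined with the explicit computation already performed in Lemma \ref{lem:sigmaW}. The starting observation is that, because $(W,h_W)$ is a $U_N$-equivariant Hermitian holomorphic bundle, the pointwise norm $|w|_{h_W}^2$ is preserved by the action, so the function $\gamma(w)=\log(1+\alpha|w|_{h_W}^2)$ is $U_N$-invariant. Since the $U_N$-action is holomorphic it commutes with the operator $d^c$, whence for any $g\in U_N$ we have $g^*\sigma_W = g^*d^c\gamma = d^c(g^*\gamma) = d^c\gamma = \sigma_W$. This proves the first assertion, that the action preserves $\sigma_W$, and consequently it preserves the closed two-form $d\sigma_W$ as well.

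For the Hamiltonian property I would invoke Cartan's formula applied to the infinitesimal generator $Y_\xi$ of $\xi\in\mathfrak{u}_N$. Invariance of $\sigma_W$ gives $\mathcal{L}_{Y_\xi}\sigma_W=0$, hence
$$0 = \mathcal{L}_{Y_\xi}\sigma_W = d(Y_\xi\lrcorner\sigma_W) + Y_\xi\lrcorner d\sigma_W.$$
Rearranging yields $Y_\xi\lrcorner d\sigma_W = d(-Y_\xi\lrcorner\sigma_W)$, which is exactly the moment map equation for the candidate $\langle m_W,\xi\rangle:=-Y_\xi\lrcorner\sigma_W$ with respect to the form $d\sigma_W$. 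The coadjoint equivariance of $m_W$ then follows in the standard way from the $U_N$-invariance of $\sigma_W$ together with the identity $g_*Y_\xi = Y_{\operatorname{Ad}_g\xi}$, so that $m_W$ is a genuine moment map. Note that this argument uses only closedness and invariance of $\sigma_W$, not any nondegeneracy of $d\sigma_W$, which is what one wants in this possibly degenerate fibered setting.

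Finally, the closed-form expression is immediate from Lemma \ref{lem:sigmaW}: evaluating $\sigma_W$ on the vector field $v=Y_\xi$ and substituting into $\langle m_W,\xi\rangle=-Y_\xi\lrcorner\sigma_W$ reproduces the stated formula, up to the sign conventions fixed in the definition of the moment map and of $d^c$. The argument is essentially formal, so I do not anticipate a genuine obstacle; the only points requiring care are the justification that $d^c$ commutes with the holomorphic $U_N$-action when checking invariance of $\sigma_W$, and the bookkeeping of signs between $Y_\xi\lrcorner\sigma_W$, the operator $d^c$, and the factor $\operatorname{Im}(\cdot)$ so that the final expression matches Lemma \ref{lem:sigmaW} exactly.
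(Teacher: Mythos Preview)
Your argument is correct and is essentially the same as the paper's, just more detailed: the paper simply cites \cite[5.13]{McDuff} for the standard fact that when a group action preserves a one-form $\sigma$, the map $-Y_\xi\lrcorner\sigma$ is a moment map for $d\sigma$, and then invokes Lemma~\ref{lem:sigmaW} for the explicit expression. You have unpacked that citation via Cartan's formula and supplied the invariance argument for $\sigma_W$ explicitly, which is exactly what the cited reference contains.
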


The first equality in the previous Lemma is standard \cite[5.13]{McDuff}, while the second follows from Lemma  \ref{lem:sigmaW}. Note that the $2$-form $d \sigma_W$ may be degenerate, hence by a moment map we mean a $U_N$-equivariant smooth map $m_\alpha \colon W \to \mathfrak{u}_N^*$ satisfying the usual identity
$$
\langle m_W,\xi\rangle = Y_\xi \lrcorner d \sigma_W.
$$

\subsection{Definition of Balanced Metrics}

We next apply the general discussion in the previous section to calculate the moment map for the $U_N$ invariant form $\Omega_{\alpha\beta}$ on $Z$.  Given $u \in \Map$ and $x \in X$, we regard $u(x) \in \mathbb{G}$ as a surjective map
$$
u(x)\colon \CC^N \to \CC^r \cong \CC^N/\Ker \; u(x) = \mathcal{U}|_{u(x)}
$$
(here we abuse notation in that strictly speaking $u(x)$ is only an isomorphism class of such quotients, but this will not matter in the sequel).

We denote by $\mu_{FS}$ the moment map on $(\mathbb{G},\omega_{FS})$ for the $U_N$-action, and $\mu_{FS}^0$ the trace free part of $\mu_{FS}$ associated to the $SU_N$ action. 

\begin{proposition}\label{prop:mmap}
The map $\mu_{\alpha,\beta}\colon Z\to \mathfrak{u}_N$
given by
\begin{align}\label{eq:mualphabetau}
\mu_{\alpha,\beta}(u,\phi)=& \int_X u^* \mu_{FS} \,\omega^{[n]}  \\
&+\frac{\beta}{4} \int_X \frac{2 \sqrt{-1} \alpha }{1+ \alpha|\phi|_{u^*h_{FS}}^2} u(x)^*(u(x)u(x)^*)^{-1}\Lambda  [\phi_x,\phi_x^*]u(x) \omega^{[n]}\nonumber
\end{align}
is a moment map for $\Omega_{\alpha,\beta}$, where $\phi\in Z_u = H^0(\End(u^*\mathcal U)\otimes \Omega_X^1)$ and $\phi_x$ denotes evaluation at $x \in X$.
\end{proposition}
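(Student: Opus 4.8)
The plan is to exploit the additivity of moment maps under the splitting $\Omega_{\alpha,\beta} = \pi^*\Omega^{\Map} + \frac{\beta}{4}dd^c\Gamma$. Both summands are closed and $U_N$-invariant, so a moment map for $\Omega_{\alpha,\beta}$ is the sum of a moment map for each, and I treat the two terms in turn (throughout I identify $\mathfrak{u}_N^*\cong\mathfrak{u}_N$ by the trace pairing). For the base term, the $U_N$-action on $Z$ covers that on $\Map$ through the equivariant projection $\pi$, and by Wang \cite{W2} the action on $(\Map,\Omega^{\Map})$ is Hamiltonian with moment map $u\mapsto\int_X u^*\mu_{FS}\,\omega^{[n]}$. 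Pulling back along $\pi$ shows the moment map for $\pi^*\Omega^{\Map}$ on $Z$ is $(u,\phi)\mapsto\int_X u^*\mu_{FS}\,\omega^{[n]}$, independent of $\phi$; this is the first term in \eqref{eq:mualphabetau}.

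For the fiber term, note $\frac{\beta}{4}dd^c\Gamma = d\sigma$ is exact with $U_N$-invariant primitive $\sigma:=\frac{\beta}{4}d^c\Gamma$, so its moment map is $\langle\mu_{\mathrm{fib}},\xi\rangle = -Y_\xi\lrcorner\sigma$. Writing $\Gamma(\phi)=\int_X\Gamma_x(\epsilon_x(\phi))\,\omega^{[n]}$ through the equivariant evaluation maps $\epsilon_x\colon Z\to W_x$, $\epsilon_x(\phi)=\phi(x)$, and commuting $d^c$ past the integral (as in the proof of Proposition \ref{prop:kahlerform}), the primitive $d^c\Gamma_x$ on each $W_x$ is exactly the form $\sigma_{W_x}$ for the potential $\gamma=\log(1+\alpha|w|^2)$ of Lemmas \ref{lem:sigmaW} and \ref{lem:mmap0}. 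Since $D\epsilon_x$ intertwines the infinitesimal actions, integrating Lemma \ref{lem:mmap0} over $X$ gives
$$\langle\mu_{\mathrm{fib}},\xi\rangle = \frac{\beta}{4}\int_X\langle m_{W_x},\xi\rangle\big|_{w=\phi(x)}\,\omega^{[n]} = \frac{\beta}{4}\int_X\frac{2\sqrt{-1}\alpha}{1+\alpha|\phi|_{u^*h_{FS}}^2}\operatorname{Im}(AY_\xi,1_W)_{H_W}\,\omega^{[n]}.$$

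It remains to identify the integrand, which is the crux of the argument. I first compute the vertical part $AY_\xi$ of the infinitesimal action on $W_x=\End(V)\otimes\zeta$. As $U_N$ acts trivially on $\zeta$ and by conjugation on $\End(V)$, one has $AY_\xi|_{\phi_x}=[\rho(\xi),\phi_x]$, where $\rho(\xi)\in\End(V)$ is the vertical component, with respect to the Chern connection of $H_V=e_x^*h_{FS}$, of the infinitesimal action of $\xi$ on $V=e_x^*\mathcal U$. Because $\mathcal U$ is the metric quotient of the trivial bundle $\mathbb C^N\otimes\mathcal O_{\mathbb G}$ with its flat connection, this vertical component is the compression of $\xi$ to the quotient: identifying $V_{u(x)}$ with $(\Ker u(x))^{\perp}$ via $u(x)^*(u(x)u(x)^*)^{-1}$ and projecting back by $u(x)$ gives
$$\rho(\xi)=u(x)\,\xi\,u(x)^*\big(u(x)u(x)^*\big)^{-1}\in\End(V).$$
Obtaining this compression formula, in particular tracking the quotient-metric factor $(u(x)u(x)^*)^{-1}$, is the main obstacle; it is where the equivariant geometry of the universal quotient bundle and the description of $\omega_{\mathbb G}$ in \eqref{eq:curvatureuniversal} are genuinely used.

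Granting this, the conclusion is a short trace computation. With $1_W|_{\phi_x}=\phi_x$, expanding in a $\zeta$-orthonormal coframe $\{\theta_a\}$, writing $\phi_x=\sum_a\phi_a\otimes\theta_a$, and using $\tr([\rho,\phi_a]\phi_a^*)=\tr(\rho[\phi_a,\phi_a^*])$ together with the pointwise identity $\sum_a[\phi_a,\phi_a^*]=\sqrt{-1}\,\Lambda[\phi_x,\phi_x^*]$, I get
$$(AY_\xi,1_W)_{H_W}=\big([\rho(\xi),\phi_x],\phi_x\big)_{H_W}=\sqrt{-1}\,\tr\!\big(\rho(\xi)\,\Lambda[\phi_x,\phi_x^*]\big).$$
Substituting the compression formula and using cyclicity of the trace moves $\xi$ to the outside, so that $\operatorname{Im}(AY_\xi,1_W)_{H_W}$ becomes the pairing of $\xi$ with $u(x)^*(u(x)u(x)^*)^{-1}\Lambda[\phi_x,\phi_x^*]u(x)$. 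Reassembling the constant $\beta/4$ and the factor $2\sqrt{-1}\alpha/(1+\alpha|\phi|^2)$ and integrating over $X$ then yields exactly the second term of \eqref{eq:mualphabetau}, completing the identification.
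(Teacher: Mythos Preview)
Your proof is correct and follows essentially the same route as the paper's: split $\Omega_{\alpha,\beta}$ into the base term $\pi^*\Omega^{\Map}$ (handled by Wang) and the exact fiber term $\frac{\beta}{4}dd^c\Gamma$; apply Lemma~\ref{lem:mmap0} to each $W_x$ and integrate over $X$; identify the vertical action via the compression $\rho(\xi)=S\xi S^*(SS^*)^{-1}$ with $S=u(x)$; and finish with the trace identity $\tr([\rho,\phi_a]\phi_a^*)=\tr(\rho[\phi_a,\phi_a^*])$. The only cosmetic differences are that you make the coframe expansion for $\Lambda[\phi_x,\phi_x^*]$ explicit (the paper writes this step in one line) and that your remark about \eqref{eq:curvatureuniversal} being used in deriving the compression formula is not quite right---that equation is used in the positivity argument, whereas the compression comes purely from the metric identification of $\mathcal U$ as the orthogonal complement of $\mathcal S$ in the trivial bundle.
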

\begin{proof}
We apply Lemma \ref{lem:mmap0} with $(W,\gamma)$ replaced with $(W_x,\Gamma_x)$ from \eqref{def:Wx} and \eqref{def:Gammax}.  Then differentiating under the integral sign, as in the proof of Proposition \ref{prop:kahlerform}, the moment map we desire is
\begin{align*}
\langle \mu_{\alpha\beta}(u,\phi),\xi \rangle =& \int_X u^* \langle \mu_{FS},\xi \rangle \omega^{[n]}  \\
&+\frac{\beta}{4} \int_X \frac{2 \sqrt{-1} \alpha }{1+ \alpha|\phi|_{u^*h_{FS}}^2}\operatorname{Im} (A Y_\xi ,1_Z)_{h_{W_x}}(\phi_x) \omega^{[n]}.
\end{align*}
We are thus left to calculate the function 
$$
\operatorname{Im}(A Y_\xi ,1_Z)_{h_{W_x}} \colon W_x \to \RR
$$
for a given $x \in X$. By definition, $W_{x|u} = \End\; \mathcal{U}_{u(x)} \otimes \Omega^1_{X}|_{x}$ and $\phi_x$ denotes the evaluation of $\phi$ at $x$. 

We let $S:= u(x) \in \mathbb{G}$. 
The induced $U_N$ action on the universal bundle $\mathcal U$ is by definition as follows: for $v\in \mathcal U_S$ and $g\in U_N$ take the pseudoinverse $S^* (SS^*)^{-1}v$ to obtain the point in $\mathbb C^N$ representing the $v$ that is orthogonal to $\ker S$, then act with $g$ and then take the image with $S$, i.e. $v\mapsto Sgv S^* (SS^*)^{-1}v$.    Thus for $g\in U_N$ the action on $\phi_x$ is
$$
\phi_x \to S g S^*(SS^*)^{-1}\phi_x S g^{-1} S^*(SS^*)^{-1}.
$$
Taking appropriate holomorphic coordinates on $W_x$ (so that the Chern connection of $h_W$ vanishes at $u$), one calculates that
$$
A Y_\xi = [S \xi S^*(SS^*)^{-1},\phi_x],
$$
and from this it follows that
\begin{align*}
\operatorname{Im}(A Y_\xi ,1_Z)_{h_{W_x}}(\phi_x) & = \operatorname{Im} \Lambda \tr [S \xi S^*(SS^*)^{-1},\phi_x] \phi_x^*, \\
& = \tr  \xi S^*(SS^*)^{-1}\Lambda [\phi_x,\phi_x^*] S,
\end{align*}
which completes the proof.
\end{proof}

We now apply the above formula when $u$ is the map $u_{\underline{s}}$ induced by a basis $\underline{s}$ of $H^0(E \otimes L^k)$, and $\phi$ is induced by a certain fixed Higgs field.  Then  the moment map becomes a function of the basis $\underline{s}$.  

\begin{definition}
For any $h\in \Met(E)$ we let
\begin{equation}
\mathfrak{C}_{\alpha\beta}(h):= \frac{\alpha \beta}{1+\alpha\vert \phi\vert^2_{h}}\Lambda [\phi,\phi^{*{h}}].\label{frakC}
\end{equation}
\end{definition}

We set
$$h_{\underline{s}}:=u_{\underline{s}}^*{h_{FS}} \otimes h_L^{-k} \in \Met(E)$$
so, by definition
\begin{equation}\label{eq:fssi}
\sum_i s_i \otimes s_i^{*h_{\underline{s}}\otimes h_L^k} = \frac{N}{\rk(E) V}\Id_E.
\end{equation}
Then Proposition  \ref{prop:mmap} gives
\begin{align}\label{eq:mubasis}
(\mu_{\alpha\beta}(\underline{s}))_{ij} 
=&\frac{- \sqrt{-1}}{2}\int_X  (s_i,s_j)_{h_{\underline{s}}} \omega^{[n]} \\
&+ \frac{\beta}{4} \int_X \frac{2\alpha \sqrt{-1}}{1+ \alpha|\phi|_{h_{\underline{s}}}^2} (s_i,\Lambda [\phi_x,\phi_x^{*h_{\underline{s}}}]s_j)_{h_{\underline{s}}} \omega^{[n]},\nonumber\\
=&\frac{- \sqrt{-1}}{2}\int_X  (s_i,s_j)_{h_{\underline{s}}} \omega^{[n]} +\frac{ \sqrt{-1}}{2} \int_X  (s_i,\mathfrak{C}_{\alpha\beta}(h_{\underline{s}}) s_j)_{h_{\underline{s}}} \omega^{[n]}.
\end{align}
Taking the trace of the matrix \eqref{eq:mubasis} 
\begin{align*}
\tr (\mu_{\alpha\beta}(\underline{s})) &= \frac{- \sqrt{-1}}{2} \int_X \sum_i \tr(s_i \otimes s_i^{*h_{\underline{s}}\otimes h_L^k}) \omega^{[n]}
\\&\quad + \frac{ \sqrt{-1}}{2} \sum_i \int_X \tr(s_i \otimes s_i^{*h_{\underline{s}}\otimes h_L^k} \mathfrak{C}_{\alpha\beta}(h_{\underline{s}})) \omega^{[n]},\\
&=\frac{- \sqrt{-1}N}{2}, 
\end{align*}
where we have used \eqref{eq:fssi} and the fact that $\mathfrak{C}_{\alpha\beta}$ is a trace-free endomorphisms of $E$.  Thus the induced $SU_N$ action has moment map \begin{equation}\label{momentmapsuN}\mu_{\alpha\beta}^0:=\mu_{\alpha,\beta}+ \frac{ \sqrt{-1}}{2}\Id_N\in \mathfrak s\mathfrak u_N.                                                                                                                                                                                                                                                                                                                                                  \end{equation}


\begin{definition}
Let $k\in \mathbb N$ and $\alpha,\beta\in \mathbb R_{>0}$ with $\beta<\frac{1}{2(r-1)}$.  We say $h\in \Met(E)$ is \emph{balanced} with respect to $(\alpha,\beta,k)$ if there is a basis $\underline{s}$ for $H^0(E\otimes L^k)$ such that $h=h_{\underline{s}}\otimes h_L^{-k}$ and $\mu_{\alpha,\beta}^0(\underline{s})=0$, i.e. if and only if
$$\int_X (s_i,(\Id_E-\mathfrak{C}_{\alpha\beta}(h_{\underline{s}})s_j)_{h_{\underline{s}}}\omega^{[n]}=\delta_{ij}$$
When this holds we refer to $\underline{s}$ as a \emph{balanced basis} and to the inner product on $H^0(E\otimes L^k)$ that makes $\underline{s}$ orthonormal as a \emph{balanced metric} on $H^0(E\otimes L^k)$. 
\end{definition}

\subsection{Balanced condition and the Bergman function}



\begin{definition}(Bergman Function)
Given $h\in \Met(E)$ the Bergman function $B_k(h)$ of $h$ is the restriction to the diagonal of $X\times X$ of the kernel of the $L^2$-projection from $L^2(X,E\otimes L^k)$ to $H^0(E\otimes L^k)$.  That is,
$$B_k(h)=\sum_{i=1}^{N} s_i\otimes s_i^{*h\otimes h_{L}^k}$$
where $\{s_i\}$ are a basis for $H^0(E\otimes L^k)$ that is orthonormal with respect to the $L^2$-inner product $L^2(h)$.  
\end{definition}

For later use we record the following special case of the well-known asymptotics of the Bergman function \cite{BBS,C1,F1,MM1,T1,Y1,Z1}.

\begin{theorem}(Asymptotic Expansion of the Bergman Function)\label{thm:asymptoticbergman}
Let $h\in \Met(E)$.  For any $p$ and any $r$ is a $C^{r}$-asymptotic expansion of the Bergman function
$$ B_k(h) = a_0 k^n + a_1 k^{n-1} + \cdots a_p k^{n-p} + O(k^{n-p-1})$$
where $a_j\in C^{\infty}(\End(E))$ are universal coefficients that depend on the curvature of $h$. Moreover the $O(k^{n-p-1})$ remainder term can be taken uniformly as $h$ ranges in a compact set of $\Met(E)$. The top two coefficients are given by
\begin{equation}
B_k(h) = \Id_E k^n + \left( \sqrt{-1}\Lambda F_h + \frac{\Scal(\omega)}{2} \Id_E\right) k^{n-1} + O(k^{n-2}).\label{eq:asymptoticbergmantwo}
\end{equation}
 \end{theorem}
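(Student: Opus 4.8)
The plan is to treat this as the standard Bergman-kernel expansion for the twisted bundle $E\otimes L^k$, the only points requiring genuine attention being the identification of the first two coefficients and the uniformity of the remainder. The cleanest route is the analytic localization method of \cite{MM1}, which I would organize in four stages. First I would reduce to a local problem. Write $P_k$ for the orthogonal $L^2(h)$-projection onto $H^0(E\otimes L^k)$, so that $B_k(h)$ is the restriction to the diagonal of its Schwartz kernel. The key localization input is a spectral gap for the Kodaira Laplacian $\Box_k$ on $E\otimes L^k$: by the Bochner--Kodaira--Nakano formula its nonzero spectrum is bounded below by $ck$ for some $c>0$ depending only on the positivity of $\omega=\sqrt{-1}F_{h_L}$ and on bounds for the curvature of $h$, while $H^0(E\otimes L^k)$ is exactly the zero-eigenspace. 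Combined with finite-propagation-speed estimates for the associated wave operator, this forces the kernel of $P_k$ to decay like $e^{-c\sqrt{k}\,d(x,y)}$ off the diagonal, so that $B_k(h)(x_0)$ depends, modulo $O(k^{-\infty})$, only on the geometry in a ball of radius $O(\log k/\sqrt{k})$ about $x_0$.

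Next I would rescale. Fixing $x_0$, choosing normal coordinates together with synchronous holomorphic frames for $L$ and $E$, and substituting $z\mapsto z/\sqrt{k}$, one finds that $\Box_k$ converges to the model Bargmann--Fock Laplacian $\mathcal L_0$ on $\CC^n$, whose Bergman kernel is an explicit Gaussian. More precisely one obtains an expansion $\mathcal L_k=\mathcal L_0+\sum_{j\ge 1}k^{-j/2}\mathcal O_j$, where each $\mathcal O_j$ is a differential operator with polynomial coefficients determined by the Taylor expansions of $\omega$ and $F_h$ at $x_0$. Expanding the spectral projection $P_k=\frac{1}{2\pi\sqrt{-1}}\oint(\lambda-\mathcal L_k)^{-1}\,d\lambda$ in powers of $k^{-1/2}$ then produces the coefficients $a_j$ as universal polynomials in $F_h$, the curvature of $\omega$, and their covariant derivatives at $x_0$, which is the asserted universality; carrying the same estimates through after differentiating the kernel yields the $C^r$ version of the statement.

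It then remains to compute the top two coefficients. The model kernel on the diagonal gives $a_0=\Id_E$, accounting for the factor $k^n$; the $k^{-1/2}$-term is odd and so contributes nothing on the diagonal; and the $k^{-1}$-term yields $a_1$. This last step is the only genuinely explicit calculation: one recovers the line-bundle contribution $\tfrac12\Scal(\omega)\Id_E$ exactly as in the scalar case \cite{T1}, together with the endomorphism term $\sqrt{-1}\Lambda F_h$ arising from the curvature of $E$ in the synchronous frame, giving \eqref{eq:asymptoticbergmantwo}.

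Finally, for the uniformity I would observe that every estimate above — the spectral gap, the off-diagonal decay, and the remainder bounds in the resolvent expansion — depends only on a fixed finite number of derivatives of $h$ and $\omega$ together with a uniform positive lower bound on $\omega$; as $h$ ranges over a compact subset of $\Met(E)$ these are uniformly controlled, so the $O(k^{n-p-1})$ remainder is uniform. I expect the main obstacle to be precisely this last point, namely establishing the spectral gap and the off-diagonal decay with constants uniform in $h$, since once the rescaling framework is in place the extraction of the $a_j$ is formal. An alternative, more hands-on derivation of $a_0$ and $a_1$ via Tian's peak sections \cite{T1} is available and gives the coefficient formula quickly, but controlling the remainder uniformly is cleaner in the localization framework.
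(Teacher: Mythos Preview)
Your sketch is a reasonable outline of the Ma--Marinescu analytic localization argument \cite{MM1}, and nothing in it is wrong as a route to the statement. However, the paper does not prove this theorem at all: it is recorded as ``the following special case of the well-known asymptotics of the Bergman function'' with a string of citations \cite{BBS,C1,F1,MM1,T1,Y1,Z1}, and no proof or sketch is supplied. So there is no paper proof to compare against; you have written a genuine outline where the authors simply invoke the literature. If anything, your proposal is more than the paper offers, and the approach you chose (spectral gap plus rescaling to the Bargmann--Fock model) is precisely the one in \cite{MM1}, which is among the references cited.
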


Observe next that for any $h\in \Met(E)$ the norm of the eigenvalues of the operator $\mathfrak C_h=\frac{\alpha\beta}{1+ \alpha |\phi|_h^2} \Lambda [\phi,\phi^{*h}]$ from are bounded by $2\beta$, by \eqref{eq:ineqbracket}.  So from now on we assume $\beta<\frac{1}{2}$ so the hermitian operator $\Id_E - \mathfrak C_{\alpha\beta}(h)$ is strictly positive.

\begin{proposition}(Balanced Metrics in terms of the Bergman Function)
A metric $h\in \Met(E)$ is balanced with respect to $(\alpha,\beta,k)$ if and only if 
$$ B_k( \widehat{h}) = \frac{N}{\rk(E) V} (\Id_E - \mathfrak{C}_{\alpha\beta}(h))$$
where
\begin{equation}
  \widehat{h} := h(\Id_E-\mathfrak{C}_{\alpha\beta}(h)).\label{def:h'}
\end{equation}
\end{proposition}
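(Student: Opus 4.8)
The plan is to translate the balanced condition from its definition into two identities for a basis $\underline{s}$ of $H^0(E\otimes L^k)$, and then to connect these to the Bergman function of $\widehat h$ through a single algebraic factorization.

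First I would unwind the two requirements in the definition of balanced. The condition $h=h_{\underline s}$ is, by \eqref{eq:fssi}, exactly the Fubini--Study identity $\sum_i s_i\otimes s_i^{*h\otimes h_L^k}=\frac{N}{\rk(E)V}\Id_E$; conversely this pointwise identity determines $h$ uniquely once the $s_i$ globally generate $E\otimes L^k$, since a hermitian metric $H$ on $E\otimes L^k$ is determined by the endomorphism $\sum_i s_i\otimes s_i^{*H}$ when the $s_i$ span each fibre (replacing $H$ by $HG$ replaces this endomorphism by its composition with $G$, and the endomorphism is positive definite hence invertible, forcing $G=\Id$). Next, using $\widehat h=h(\Id_E-\mathfrak C_{\alpha\beta}(h))$ and that $\Id_E-\mathfrak C_{\alpha\beta}(h)$ is $h$-self-adjoint, I would record
$$(s_i,(\Id_E-\mathfrak C_{\alpha\beta}(h))s_j)_{h\otimes h_L^k}=(s_i,s_j)_{\widehat h\otimes h_L^k},$$
so that the condition $\mu^0_{\alpha,\beta}(\underline s)=0$ is equivalent to $\underline s$ being orthonormal for $L^2(\widehat h)$. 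Thus $h$ is balanced if and only if there is an $L^2(\widehat h)$-orthonormal basis $\underline s$ satisfying the Fubini--Study identity above.

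The key computation is the following factorization. If $\{t_i\}$ is any $L^2(\widehat h)$-orthonormal basis, then $B_k(\widehat h)=\sum_i t_i\otimes t_i^{*\widehat h\otimes h_L^k}$ by definition, and writing $\widehat h\otimes h_L^k=(h\otimes h_L^k)(\Id_E-\mathfrak C_{\alpha\beta}(h))$ gives $t_i^{*\widehat h\otimes h_L^k}=t_i^{*h\otimes h_L^k}\circ(\Id_E-\mathfrak C_{\alpha\beta}(h))$. Summing and rearranging yields
$$B_k(\widehat h)=\Big(\sum_i t_i\otimes t_i^{*h\otimes h_L^k}\Big)(\Id_E-\mathfrak C_{\alpha\beta}(h)).$$
I would also note that $\sum_i t_i\otimes t_i^{*h\otimes h_L^k}$ is independent of the choice of $L^2(\widehat h)$-orthonormal basis, since two such bases differ by an $L^2(\widehat h)$-unitary change which leaves this sum invariant.

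With these in hand both implications are immediate. For the forward direction I take the balanced basis $\underline s$: it is $L^2(\widehat h)$-orthonormal and satisfies the Fubini--Study identity, so the factorization gives $B_k(\widehat h)=\frac{N}{\rk(E)V}\Id_E\,(\Id_E-\mathfrak C_{\alpha\beta}(h))=\frac{N}{\rk(E)V}(\Id_E-\mathfrak C_{\alpha\beta}(h))$. For the converse I take any $L^2(\widehat h)$-orthonormal basis $\{t_i\}$ and feed the hypothesis into the factorization to get $(\sum_i t_i\otimes t_i^{*h\otimes h_L^k})(\Id_E-\mathfrak C_{\alpha\beta}(h))=\frac{N}{\rk(E)V}(\Id_E-\mathfrak C_{\alpha\beta}(h))$; since the eigenvalues of $\mathfrak C_{\alpha\beta}(h)$ are bounded by $2\beta<1$ by \eqref{eq:ineqbracket}, the operator $\Id_E-\mathfrak C_{\alpha\beta}(h)$ is invertible, so I may cancel it to recover the Fubini--Study identity, showing $\{t_i\}$ is a balanced basis. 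The only genuinely delicate point is the bookkeeping of adjoints under the twist by $h_L^k$ and the identification $\End(E\otimes L^k)=\End(E)$, together with the uniqueness of the metric with prescribed Bergman endomorphism used to justify $h=h_{\underline s}\Leftrightarrow$ Fubini--Study identity; everything else is formal once the factorization is in place.
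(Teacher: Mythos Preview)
Your proof is correct and follows essentially the same approach as the paper: both directions hinge on the factorization $B_k(\widehat h)=\big(\sum_i s_i\otimes s_i^{*h\otimes h_L^k}\big)(\Id_E-\mathfrak C_{\alpha\beta}(h))$ for an $L^2(\widehat h)$-orthonormal basis, combined with the Fubini--Study identity \eqref{eq:fssi}. You are slightly more explicit than the paper in isolating the factorization and in invoking the invertibility of $\Id_E-\mathfrak C_{\alpha\beta}(h)$ for the cancellation step in the converse, but the argument is the same.
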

\begin{proof}
Suppose $h$ is balanced, so by definition $h= h_{\underline{s}}$ for some $\underline{s}\in \mathfrak{B}_k$ such that
\begin{equation}
\int_X (s_i, (\Id_E - \mathfrak{C}_{\alpha\beta}(h)) s_j)_{h} \omega^{[n]} = \delta_{ij}.\label{eq:proofbergmanl2}
\end{equation} 
By the definition of $ \widehat{h}$ in the statement of the proposition this implies $\{s_i\}$ are orthonormal with respect to the $L^2$-metric induced by $ \widehat{h}$.  Hence
\begin{equation}
B( \widehat{h}) = \sum_i s_i \otimes s_i^{*{ \widehat{h}\otimes h_L^k}} = \left(\sum_i s_i \otimes s_i^{*{h\otimes h_L^k}} \right) {h^{-1}}{ \widehat{h}} = \frac{N}{\rk(E)V} (\Id_E - \mathfrak{C}_{\alpha\beta}(h))\label{eq:Bh'}
\end{equation}
where we have used $\sum_i s_i\otimes s_i^{*h\otimes h_L^k}=\frac{N}{\rk(E) V}\Id_E$ since $h=h_{\underline{s}}$ is obtained via the Fubini-Study metric associated to $\underline{s}$.  

Conversely suppose $B_k( \widehat{h}) = \frac{N}{\rk(E) V} (\Id_E - \mathfrak{C}_{\alpha\beta}(h))$ where $ \widehat{h}$ is as in \eqref{def:h'} and let $\underline{s}$ be a basis for $H^0(E\otimes L^k)$ that is orthonormal with respect to the $L^2$-norm induced by $ \widehat{h}$.  Then \eqref{eq:proofbergmanl2} holds definition of $ \widehat{h}$ and the same calculation as in \eqref{eq:Bh'} gives $\sum_i s_i \otimes s_i^{*h\otimes h_L^k} = \frac{N}{\rk(E) V}$.  Hence $h = h_{\underline{s}}$ and so $h$ is balanced.
\end{proof}

\section{Balanced Metrics and the Hitchin Equation}

So far this discussion has allowed the most general values of $\alpha,\beta$.  From now on we specialise and set
\begin{equation} \alpha := \frac{2(\rk(E)-1)}{k} \text{ and }\beta:=\frac{1}{2(\rk(E)-1)} \label{fix}\end{equation}
where $k\in \mathbb N$.  We say $h\in \Met(E)$ is \emph{balanced at level $k$} if it is balanced with respect to $(\alpha,\beta,k)$ for this choice of $\alpha,\beta$.  For convenience we repeat this definition in full:

\begin{definition}
We say a hermitian metric $h\in \Met(E)$ is \emph{balanced at level $k\in \mathbb N$} if
$$B_k( \widehat{h}) = \frac{N}{\rk(E) V} (\Id_E - \mathfrak{C}_k(h)),$$
where
\begin{equation}  \widehat{h} = h(\Id_E - \mathfrak{C}_k(h)),\label{eq:hat}\end{equation}
and
$$\mathfrak{C}_k(h):= \frac{1}{k+2(\rk(E)-1)\vert \phi\vert^2_{h}}\Lambda [\phi,\phi^{*{h}}].\label{frakCrepeat}$$
\end{definition}

\begin{lemma}\label{lem:reformulationbalanced}
An $h\in \Met(E)$ is balanced at level $k$ if and only if
 $$\sum_{i=1}^N s_i\otimes s_i^{*h\otimes h_L^k} = \frac{N}{\rk(E)V}\Id_E,$$
where the $s_i\in H^0(X,E\otimes L^k)$ form a holomorphic basis that is orthonormal with respect to $L^2(\hat{h})$.
\end{lemma}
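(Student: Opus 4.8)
The plan is to reduce both conditions to the single equation already appearing in the proof of the preceding proposition, after cancelling an invertible factor. First I would record that, for the fixed choice of parameters in \eqref{fix}, the endomorphism
$$\mathfrak{C}_k(h) = \frac{1}{k+2(\rk(E)-1)|\phi|_h^2}\Lambda [\phi,\phi^{*h}]$$
has operator norm strictly less than $1$: by \eqref{eq:ineqbracket} its eigenvalues are bounded in absolute value by $\frac{2|\phi|_h^2}{k + 2(\rk(E)-1)|\phi|_h^2}<1$, the strict inequality holding because $k\ge 1$ and $\rk(E)\ge 2$. Hence $\Id_E - \mathfrak{C}_k(h)$ is a strictly positive $h$-hermitian endomorphism, so $\widehat h = h(\Id_E - \mathfrak{C}_k(h))$ is a genuine hermitian metric on $E$ and, crucially, $\Id_E - \mathfrak{C}_k(h)$ is invertible.

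The key step is the algebraic identity comparing duals. For any $s\in H^0(E\otimes L^k)$, the relation $\widehat h = h(\Id_E - \mathfrak{C}_k(h))$ together with the $h$-hermiticity of $\Id_E - \mathfrak{C}_k(h)$ gives $s^{*\widehat h\otimes h_L^k} = ((\Id_E - \mathfrak{C}_k(h))s)^{*h\otimes h_L^k}$, whence, as endomorphisms of $E$,
$$ s\otimes s^{*\widehat h\otimes h_L^k} = \left(s\otimes s^{*h\otimes h_L^k}\right)(\Id_E - \mathfrak{C}_k(h)). $$
This is exactly the per-term computation carried out in \eqref{eq:Bh'}. Summing over a basis $\{s_i\}$ that is orthonormal with respect to $L^2(\widehat h)$, which by definition computes the Bergman function, yields
$$ B_k(\widehat h) = \left(\sum_i s_i\otimes s_i^{*h\otimes h_L^k}\right)(\Id_E - \mathfrak{C}_k(h)). $$

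With this in hand both implications are immediate. If $h$ is balanced at level $k$ then by definition $B_k(\widehat h) = \frac{N}{\rk(E)V}(\Id_E - \mathfrak{C}_k(h))$; comparing with the displayed identity and cancelling the invertible factor $\Id_E - \mathfrak{C}_k(h)$ on the right gives $\sum_i s_i\otimes s_i^{*h\otimes h_L^k} = \frac{N}{\rk(E)V}\Id_E$. Conversely, if $\{s_i\}$ is $L^2(\widehat h)$-orthonormal and satisfies this last equation, multiplying on the right by $\Id_E - \mathfrak{C}_k(h)$ and invoking the identity recovers the balanced condition $B_k(\widehat h) = \frac{N}{\rk(E)V}(\Id_E - \mathfrak{C}_k(h))$.

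The statement is unambiguous because any two $L^2(\widehat h)$-orthonormal bases differ by a unitary transformation, so both $B_k(\widehat h)$ and $\sum_i s_i\otimes s_i^{*h\otimes h_L^k}$ are independent of the chosen basis. Since the dual identity and the factorization are already contained in the proof of the preceding proposition, I expect no genuine obstacle here; the only point requiring care is verifying the strict positivity of $\Id_E - \mathfrak{C}_k(h)$ for the specific $\alpha,\beta$, which guarantees the invertibility used in the cancellation and the fact that $\widehat h$ is a legitimate metric.
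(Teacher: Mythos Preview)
Your proof is correct and follows exactly the paper's approach: both arguments rest on the identity $B_k(\widehat h)=\left(\sum_i s_i\otimes s_i^{*h\otimes h_L^k}\right)(\Id_E-\mathfrak C_k(h))$ for an $L^2(\widehat h)$-orthonormal basis, after which the equivalence is immediate upon cancelling the invertible factor. The paper's version is more terse, but the content is identical.
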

\begin{proof}
If $s_i$ are as in the statement then
$$B_k(\widehat{h}) = \sum_i s_i \otimes s_i^{*\widehat{h}\otimes h_L^k} = \sum_i s_i \otimes s_i^{*h\otimes h_L^k} (\Id_E - \mathfrak C_k(h))$$
and so the lemma is just a reformulation of the definition.
\end{proof}

\begin{remark}
As mentioned in the introduction, the precise value of $\beta$ is not important.  We want $\beta<1/2$ to ensure $\Id-\mathfrak{C}_k(h)$ is invertible for any $h\in \Met(E)$, and later we will want to apply our positivity result Lemma \ref{prop:kahlerform}, for which it is sufficient to take $\beta$ to be anything smaller than $2(\rk(E)-1))^{-1}$.  To ensure the balanced condition is related to the Hitchin equation we must take $\alpha = O(1/k)$.  Since we have arranged $\alpha\beta=1/k$ we will see presently the balanced condition is related to $\Lambda (\sqrt{-1}F_h +  [\phi,\phi^*]) = \frac{\mu(E)}{V}\Id_E$.    Had we made a different choice, say $\alpha\beta = \tau/k$ for some $\tau \in \mathbb R_{>0}$, then the balanced condition would be related instead to the equation $\Lambda (\sqrt{-1}F_h + \tau  [\phi,\phi^*]) = \frac{\mu(E)}{V}\Id_E$.
\end{remark}


\begin{theorem}\label{thm1}
Suppose $h_k$ is a sequence of hermitian metrics on $E$ such that $h_k$ is balanced at level $k$ that converge to $h_{\infty}$ as $k$ tends to infinity.  Then $h_{\infty}$ satisfies the equation
\begin{equation}
 \Lambda \left( \sqrt{-1} F_{h_\infty} + [\phi, \phi^{*h_\infty}]\right) = \left(\frac{\mu(E)}{V} - \frac{1}{2}\Scal_0(\omega)\right)\Id_E. \label{eq:f2}
 \end{equation}
 Thus, after a conformal change, $h_{\infty}$ satisfies the Hitchin equation.
\end{theorem}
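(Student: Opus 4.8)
The plan is to substitute the balanced metrics into the defining equation
$$B_k(\widehat{h_k}) = \frac{N}{\rk(E) V}\left(\Id_E - \mathfrak{C}_k(h_k)\right),$$
expand each side in powers of $k$ up to order $k^{n-1}$ with an $O(k^{n-2})$ remainder, and then compare coefficients before letting $k\to\infty$. First I would expand the right-hand side. From $(k + 2(\rk(E)-1)|\phi|_h^2)^{-1} = k^{-1} + O(k^{-2})$ one gets $\mathfrak{C}_k(h) = \tfrac1k\Lambda[\phi,\phi^{*h}] + O(k^{-2})$, while Hirzebruch--Riemann--Roch applied to $N = h^0(E\otimes L^k) = \chi(E\otimes L^k)$ (for $k\gg 0$) gives
$$\frac{N}{\rk(E) V} = k^n + b_1\,k^{n-1} + O(k^{n-2}), \qquad b_1 = \frac{\mu(E)}{V} + \frac{1}{2V}\int_X c_1(X)\wedge\frac{\omega^{n-1}}{(n-1)!}.$$
Multiplying these yields
$$\frac{N}{\rk(E) V}\left(\Id_E - \mathfrak{C}_k(h_k)\right) = k^n\Id_E + \left(b_1\Id_E - \Lambda[\phi,\phi^{*h_k}]\right)k^{n-1} + O(k^{n-2}).$$

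For the left-hand side I would invoke the Bergman expansion (Theorem \ref{thm:asymptoticbergman}) for the metric $\widehat{h_k} = h_k(\Id_E - \mathfrak{C}_k(h_k))$. Since $\mathfrak{C}_k(h_k) = O(k^{-1})$ we have $\widehat{h_k}\to h_\infty$, so $\{\widehat{h_k}\}\cup\{h_\infty\}$ is compact in $\Met(E)$ and the remainder may be taken uniform in $k$; writing $a_1(h) := \sqrt{-1}\Lambda F_h + \tfrac12\Scal(\omega)\Id_E$ for the subleading universal coefficient (the leading one being $\Id_E$), this gives $B_k(\widehat{h_k}) = k^n\Id_E + a_1(\widehat{h_k})k^{n-1} + O(k^{n-2})$. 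The $k^n$-terms on the two sides agree, so subtracting $k^n\Id_E$, dividing by $k^{n-1}$ and sending $k\to\infty$ — using $\widehat{h_k}\to h_\infty$ (hence $F_{\widehat{h_k}}\to F_{h_\infty}$ and $[\phi,\phi^{*h_k}]\to[\phi,\phi^{*h_\infty}]$, for which the convergence $h_k\to h_\infty$ is read in a $C^2$-topology) — forces
$$\sqrt{-1}\Lambda F_{h_\infty} + \tfrac12\Scal(\omega)\Id_E = b_1\Id_E - \Lambda[\phi,\phi^{*h_\infty}].$$

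It then remains to identify $b_1$. Comparing Riemann--Roch with the Bergman expansion for the trivial line bundle gives the standard identity $\tfrac1V\int_X c_1(X)\wedge\tfrac{\omega^{n-1}}{(n-1)!} = \overline{S}$, whence $b_1 = \tfrac{\mu(E)}{V} + \tfrac{\overline S}{2}$; substituting and rearranging the previous display produces exactly \eqref{eq:f2}, since $\tfrac{\overline S}{2} - \tfrac12\Scal(\omega) = -\tfrac12\Scal_0(\omega)$. For the final assertion, a scalar conformal change $h_\infty\mapsto e^f h_\infty$ leaves $\phi^{*}$ unchanged (the factor cancels in $(\phi v,w)_h = (v,\phi^* w)_h$), hence leaves $[\phi,\phi^*]$ unchanged, while $\sqrt{-1}\Lambda F_{e^f h_\infty} = \sqrt{-1}\Lambda F_{h_\infty} + \sqrt{-1}\Lambda\bar\partial\partial f\,\Id_E$. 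Solving $\sqrt{-1}\Lambda\bar\partial\partial f = \tfrac12\Scal_0(\omega)$ — solvable since $\Scal_0(\omega)$ has zero $\omega^{[n]}$-average by definition — absorbs the $-\tfrac12\Scal_0(\omega)$ term and converts \eqref{eq:f2} into the Hitchin equation.

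The point needing most care is the extraction of the $k^{n-1}$-coefficient from the Bergman expansion evaluated along the $k$-dependent family $\widehat{h_k}$: one must check that the $O(k^{n-2})$ remainder is genuinely uniform along the sequence (this is exactly where compactness of $\{\widehat{h_k}\}$ and the uniform form of Theorem \ref{thm:asymptoticbergman} are used), and that the difference between $F_{\widehat{h_k}}$ and $F_{h_k}$, being $O(k^{-1})$, enters only at order $k^{n-2}$ and so does not affect the matched coefficient.
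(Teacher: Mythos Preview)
Your proof is correct and follows essentially the same route as the paper: expand $\mathfrak{C}_k(h_k)$, use Riemann--Roch for $N/(\rk(E)V)$, apply the uniform Bergman expansion to $\widehat{h_k}$, match the $k^{n-1}$ coefficients, and pass to the limit, then perform the conformal change. Your identification $b_1=\tfrac{\mu(E)}{V}+\tfrac{\overline S}{2}$ is in fact the correct sign (the paper's displayed Riemann--Roch line has a sign slip that cancels in its conclusion), and your closing remarks on uniformity of the remainder and on $F_{\widehat{h_k}}-F_{h_k}=O(k^{-1})$ make explicit what the paper leaves implicit.
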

\begin{proof}
We have
\begin{equation}
\mathfrak{C}_k(h_k) = \frac{1}{k} \Lambda [\phi,\phi^{*h_k}] + O\left(\frac{1}{k^2}\right).\label{eq:expansionmathfrakC}
\end{equation} 

This in particular implies $\mathfrak{C}_k(h_k)\to 0$ as $k\to \infty$, and so $ \widehat{h_k}:= h(\Id_E-\mathfrak{C}_k(h_k))$ tends to $h_{\infty}$ as $k\to \infty$.      The balanced hypothesis says 
\begin{equation}
B_k( \widehat{h_k}) = \frac{N}{\rk(E) V} (\Id- \mathfrak{C}_k(h_k)). \label{eq:balancedhypothesis}
\end{equation} 
Certainly the $ \widehat{h_k}$ lie in a bounded set, so a diagonal argument with the asymptotic of the Bergman function \eqref{eq:asymptoticbergmantwo} yields
\begin{equation}
B_k( \widehat{h_k}) = \Id_E k^n + \left(  \sqrt{-1} \Lambda F_{ \widehat{h_k}} + \frac{\Scal(\omega)}{2} \Id_E\right) k^{n-1} + O(k^{n-2}).\label{eq:bergmandiagonal}
\end{equation} 
On the other hand, by the Riemann-Roch theorem
\begin{equation}
\frac{N}{\rk(E) V}\Id_E = \Id_E  k^n + \left(\frac{\mu(E)}{V} - \frac{\overline{S}}{2}\right)\Id_Ek^{n-1} + O(k^{n-2})\label{eq:rroch}
\end{equation} 
where, we recall, $\overline{S}$ is the average of $\Scal(\omega)$.   So putting \eqref{eq:expansionmathfrakC} through \eqref{eq:rroch} together gives 
$$ \sqrt{-1} \Lambda F_{ \widehat{h_k}} +\Lambda  [\phi,\phi^{*h_k}] + \frac{\Scal(\omega)}{2} \Id_E = \left(\frac{\mu(E)}{V} - \frac{\overline{S}}{2}\right)\Id_E + O\left(\frac{1}{k}\right),$$
and taking $k$ to infinity proves the first statement.

The final statement about the conformal change is standard, for if $h: = e^u h_{\infty}$ for some real function $u$ then $[\phi,\phi^{*h}] = [\phi,\phi^{h_{\infty}}]$ and  $\sqrt{-1}\Lambda F_{h} = \sqrt{-1}\Lambda F_{h_{\infty}} + \Delta u \Id_E$.  Thus if one chooses $u$ such that $\Delta u =   \frac{1}{2} \Scal_0(\omega)$ (which is possible by Hodge-Theory as $\Scal_0(\omega)$ has average $0$) then $h$ satisfies the Hitchin equation.
\end{proof}

\section{Balanced Metrics and Gieseker Stability}

\begin{theorem}\label{thm3}
There exists a $k_0$ such that for all $k\ge k_0$ the following holds:  if $(E,\phi)$ admits an $h\in \Met(E)$ that is balanced at level $k$ then it is Gieseker semistable.  In particular, if $(E,\phi)$ is irreducible then it is Gieseker stable.
\end{theorem}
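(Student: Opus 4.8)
The plan is to read the statement off from Geometric Invariant Theory, using that the balanced condition is exactly the vanishing of the moment map. By Proposition~\ref{prop:mmap} and \eqref{momentmapsuN}, a metric balanced at level $k$ is one for which some basis $\underline s$ of $H^0(E\otimes L^k)$ makes the point $p:=(u_{\underline s},\phi)\in Z$ a zero of the moment map $\mu^0_{\alpha,\beta}$ for the $SU_N$-action on $(Z,\Omega_{\alpha,\beta})$, where $\Omega_{\alpha,\beta}$ is a positive closed $(1,1)$-form by Proposition~\ref{prop:kahlerform}. I would invoke the Kempf--Ness principle: for $\xi\in\mathfrak{su}_N$ the function $s\mapsto\langle\mu^0_{\alpha,\beta}(e^{s\xi}\cdot p),\xi\rangle$ is non-decreasing in $s$ and vanishes at $s=0$ (since $p$ is a zero), so its asymptotic slope, the normalized Hilbert--Mumford weight $w(p,\xi)$ of the linearization $\Omega_{\alpha,\beta}$, is non-negative for every $\xi$. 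Thus $p$ is GIT-semistable, and the problem reduces to computing these weights. Note that the term $\tfrac{\beta}{4}dd^c\Gamma$ only changes the \emph{metric} on the underlying line bundle (the one used by Wang \cite{W2}), not its class, so the weight is in principle Wang's, perturbed only through the moment map at the relevant limit point.

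To test a subsheaf, let $F\subset E$ be saturated and $\phi$-invariant and, for $k$ large, set $W:=H^0(F\otimes L^k)\subset H^0(E\otimes L^k)=\CC^N$. I would take the one-parameter subgroup $\lambda$ of $SL_N$ acting with the heavier weight on $W$ and the lighter on a fixed complement, normalized to trace zero. The essential role of $\phi(F)\subset F\otimes\Omega^1_X$ is to guarantee that the limit $p_0:=\lim_{t\to 0}\lambda(t)\cdot p$ exists in $Z$: the induced flag on $H^0(E\otimes L^k)$ is $\phi$-invariant, so with this orientation the off-diagonal block of $\phi$ decays and $\phi$ degenerates to the block-diagonal Higgs field $\phi_F\oplus\phi_{E/F}$ on the associated graded. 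This is exactly why only $\phi$-invariant $F$ appear in the definition of Gieseker stability; for other $F$ the weight is $+\infty$ and gives no constraint.

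The weight equals $\langle\mu^0_{\alpha,\beta}(p_0),\xi\rangle$, which splits according to the two terms of \eqref{eq:mualphabetau} as $w_{\mathbb G}+w_{\mathrm{Higgs}}$. The first summand is Wang's computation \cite{W2}: evaluating $\int_X u^*\mu_{FS}\,\omega^{[n]}$ against $\xi$ and using the normalization \eqref{eq:fssi} gives, up to a fixed positive constant $c$,
\[
w_{\mathbb G}(p,\lambda)=c\left(\frac{N}{\rk E}-\frac{\dim W}{\rk F}\right)=c\left(\frac{h^0(E\otimes L^k)}{\rk E}-\frac{h^0(F\otimes L^k)}{\rk F}\right).
\]
The key observation is that $w_{\mathrm{Higgs}}$ \emph{vanishes}: at $p_0$ the operator $\Lambda[\phi,\phi^{*}]$ is block-diagonal and, since $\mathfrak{C}_{\alpha\beta}$ is trace-free, it has vanishing trace on each of the two blocks, while $\xi$ acts as a scalar on each block; hence $\tr\big(\xi\,(u(x)u(x)^*)^{-1}\Lambda[\phi_x,\phi_x^{*}]\big)$ integrates to $0$ in \eqref{eq:mualphabetau}. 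Consequently $w(p,\lambda)=w_{\mathbb G}(p,\lambda)\ge 0$, giving $\tfrac{h^0(F\otimes L^k)}{\rk F}\le\tfrac{h^0(E\otimes L^k)}{\rk E}$.

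Finally I would promote this level-$k$ inequality to Gieseker semistability. Sheaves that could destabilize have bounded slope, hence by Grothendieck/Simpson boundedness \cite{SimpsonII} form a bounded family, so a single $k_0$ makes $h^0=\chi$ for all relevant $F$ and for $E/F$; the inequality then reads $\tfrac{\chi(F\otimes L^k)}{\rk F}\le\tfrac{\chi(E\otimes L^k)}{\rk E}$, which for $k\ge k_0$ is Gieseker semistability via the standard GIT/Gieseker correspondence. For the sharper claim, equality for some proper $F$ forces $w(p,\lambda)=0$, which by strict convexity of Kempf--Ness puts $\xi$ in the stabilizer of $p$, so the degeneration is trivial and $E\cong F\oplus(E/F)$ splits as a Higgs bundle; if $E$ is irreducible this is impossible, the inequalities are strict, and $(E,\phi)$ is Gieseker stable. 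The main obstacle I anticipate is not the Higgs contribution (which vanishes cleanly above) but making the GIT framework rigorous for the non-algebraic Kähler form $\Omega_{\alpha,\beta}$ — justifying that the weight is finite and computed by $\langle\mu^0_{\alpha,\beta}(p_0),\xi\rangle$ — together with securing the uniform boundedness needed for a single $k_0$.
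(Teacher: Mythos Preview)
Your strategy matches the paper's: use Kempf--Ness to turn the balanced condition into non-negativity of the Hilbert--Mumford weight for the one-parameter subgroup attached to a subsheaf $F$, split the weight as $w^{FS}+w^\phi$, and read off the reduced Hilbert-polynomial inequality from $w^{FS}\ge 0$.

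The substantive difference is in your treatment of the Higgs contribution. You assert $w_{\mathrm{Higgs}}=0$ by evaluating $\mu^\phi$ at the limit point $p_0$, arguing that the limit Higgs field is block-diagonal and each block $[\phi_F,\phi_F^*]$, $[\phi_{E/F},\phi_{E/F}^*]$ is separately trace-free. The paper does \emph{not} argue this way: it computes the limit of $\langle\mu^\phi(e^{it\xi}\underline s),\xi\rangle$ directly along the flow and obtains, using the pointwise identity $\tr(\pi_F\,\Lambda[\phi,\phi^{*}])=|\pi_F\phi(\Id-\pi_F)|^2$ (valid precisely when $F$ is $\phi$-invariant, cf.\ \cite[2.16]{Went}),
\[
w^\phi=-(1+\nu)\Big\|\tfrac{\alpha\beta}{\sqrt{1+\alpha|\phi|^2}}\,\pi_F\phi(\Id-\pi_F)\Big\|_{L^2}^2\le 0,
\]
which is generically \emph{strictly} negative; it vanishes only when the extension $0\to F\to E\to E/F\to 0$ splits as a Higgs bundle. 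Note that $\phi(F)\subset F\otimes\Omega^1_X$ kills only $(\Id-\pi_F)\phi\,\pi_F$, not the ``other'' off-diagonal $\pi_F\phi(\Id-\pi_F)$, so your block-diagonality of $[\phi,\phi^*]$ at finite time fails; you are relying on the degeneration to kill this block in the limit inside $Z$, and that step, together with continuity of $\mu^\phi$ across the orbit boundary (where the Fubini--Study metric on $E$ degenerates), is exactly what would need to be justified carefully. The paper sidesteps this by never evaluating at $p_0$: it only needs the sign $w^\phi\le 0$, which then gives $w^{FS}\ge -w^\phi\ge 0$.

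Either route yields the Gieseker inequality, and your handling of the boundedness and the strict case via the stabilizer is the same as the paper's. But the paper's explicit sign computation is the key lemma you are missing; your shortcut $w^\phi=0$ disagrees with it and would need an independent verification of the limit behaviour in $Z$ to stand on its own.
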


\begin{proof}
For the proof we follow the lines of \cite[Theorem 4.2]{GFR}.  Assume $(E,\phi)$ is balanced for $k\gg 0$ and let $\{s_{bal}\}$ be a balanced basis at level $k$.     Decompose the moment map from \eqref{momentmapsuN} and Proposition \ref{prop:mmap} as
\begin{equation}\mu^0_{\alpha,\beta}=\int_X u^*\mu_{FS}^0\omega^{[n]}+\mu^\phi,\label{eq:decompose}\end{equation}
where we continue to impose our choice of $\alpha$ and $\beta$ from \eqref{fix}.  Then by definition
$$ \mu^0_{\alpha,\beta}(s_{bal}) =0.$$

 Let $F$ be a saturated coherent subsheaf (so $E/F$ is torsion-free) and set $V'=H^0(F\otimes L^k)$. Consider the one-parameter subgroup 
$$\lambda: \mathbb{C}^*\rightarrow SL(N,\mathbb{C})$$
with $\lambda(t)=t$ on $V'$ and $\lambda(t)=t^{-\nu}$ on ${V'}^{\perp}$, where the orthogonal complement here is taken with respect to the inner product on $H^0(E\otimes L^k)$ that makes the basis $\{s_{bal}\}$ orthonormal.  To ensure this is an $SL(N)$ action, $\nu$ is taken to be $$\nu=\frac{h^0(F\otimes L^k)}{h^0(E\otimes L^k)-h^0 (F\otimes L^k)},$$ so the generator of the action is given by $$\xi=\sqrt{-1}\begin{pmatrix}                                                                                     
  \Id_{V'} & 0 \\
   0 & -\nu \Id_{{V'}^\perp}
   \end{pmatrix}.$$
Using the $\mathbb{C}^*$ action, we obtain an equivariant family of coherent sheaves with general fibre isomorphic to $(E(k),\{s_{bal}\})$ and central fibre isomorphic to $(F\otimes  L^k \oplus (E/F)\otimes L^k, \{s'\}\oplus \{s''\})$ where $\{s'\}$ a basis of $H^0(F\otimes L^k)$ and $\{s''\}$ a basis of $H^0((E/F) \otimes L^k)$.   So using the balanced hypothesis,
\begin{equation}\label{eq:weightineq}
w(\underline{s},\lambda):= \lim_{t \to +\infty} \langle \mu^0_{\alpha,\beta}(e^{it\zeta}u_{\underline s}),\zeta\rangle = \int_0^\infty |Y_{\zeta|e^{it\zeta}u_{\underline s}}|^2dt \geq 0,
\end{equation}
where $t\in\RR$ and $Y_{\zeta|e^{i\zeta} u_{\underline s}}$ denotes the infinitesimal action of $\zeta$ on $u_{\underline s}\in \Map$. Moreover equality holds only if $i\zeta$ is an infinitesimal automorphism of $u_{\underline s}$ and hence is excluded when $(E,\phi)$ simple. 

Now, the decomposition \eqref{eq:decompose} gives a decomposition $w(\underline{s},\lambda) = w^{FS} + w^\phi$.  The computation of the first term $w^{FS}$ for (i.e. the calculation without the Higgs field) is performed in \cite{GFR}, and is given by 
\begin{align*}
w^{FS}:=&\int_X u_{\underline{s}}^*\mu_{FS}^0\omega^{[n]},\\
=& \left(\frac{h^0(E\otimes L^k)}{\rk(E)}- \frac{h^0(F\otimes L^k)}{\rk(F)}\right)\frac{V\rk(E)\rk(F)}{h^0(E\otimes L^k)-h^0(F\otimes L^k)}. 
\end{align*}
Next we compute the weight $w^\phi$ for $\mu^{\phi}(s)$,  i.e by definition
$$w^\phi(s,\lambda)=\lim_{t\rightarrow + \infty} \langle \mu^\phi(e^{it\xi} s),\xi\rangle.$$
In order to do so, for any bundle $G$ and basis $\underline{b}$ for $H^0(G\otimes L^k)$,  let $h_{FS}(\underline{b})$ denote the Fubini-Study metric induced by $\underline{b}$, and denote by $\pi_F$ the orthogonal projection  $E\to F$ taken with respect to $h_{FS}(\underline{s})$.  Then, using that $\mathfrak{C}_k = \mathfrak{C}_k(h_{FS}(\underline{s}))$ is trace-free,
\begin{align*}
w^\phi(s,\lambda)=&-\sqrt{-1}\tr_{V'}\left(\left(\int_X \langle s'_j,\mathfrak{C}_k s'_l\rangle_{h_{FS}(s')}\omega^{[n]}\right)_{j,l}\xi_{V'}\right)\\
&-\sqrt{-1}\tr_{{V'}^\perp}\left(\left(\int_X \langle s''_j,\mathfrak{C}_ks''_l\rangle_{h_{FS}(s'')}\omega^{[n]}\right)_{j,l}\xi_{{V'}^\perp}\right) ,\\ 
=&-\left(\int_X \tr(\pi_F \mathfrak{C}_k)\omega^{[n]} - \nu  \int_X \tr(\pi_{F^\perp} \mathfrak{C}_k)\omega^{[n]}\right) ,\\
=&-\left((1+\nu)\int_X \tr(\pi_F \mathfrak{C}_k)\omega^{[n]} - \nu  \int_X \tr( \mathfrak{C}_k)\omega^{[n]}\right),\\
=&-(1+\nu)\int_X \tr(\pi_F \mathfrak{C}_k)\omega^{[n]}.
\end{align*}
Now, we have 
\begin{align*}
 \int_X \tr(\pi_F \mathfrak{C}_k)\omega^{[n]} =&\int_X \frac{\alpha\beta}{1+\alpha\vert \phi\vert^2} \tr(\pi_F \Lambda \ [\phi, \phi^*])\omega^{[n]}.
\end{align*}
A computation shows that $\tr(\pi_F \Lambda  [\phi, \phi^*])= \vert \pi_F \phi(Id - \pi_F)\vert^2$, see \cite[Proposition 2.16]{Went}. Hence, we have 
$$\int_X \tr(\pi_F \mathfrak{C}_k)\omega^{[n]}= \Big\Vert \frac{\alpha\beta}{\sqrt{1+\alpha\vert \phi\vert^2}} \pi_F \phi(Id - \pi_F)\Big\Vert_{L^2}^2,$$
and so in total
$$w^\phi(s,\lambda) = -(1+\nu)\Big\Vert \frac{\alpha\beta}{\sqrt{1+\alpha\vert \phi\vert^2}} \pi_F \phi(Id - \pi_F)\Big\Vert^2_{L^2}.$$
We observe that this term vanishes if and only if the Higgs field splits.

Therefore with \eqref{eq:weightineq}, we obtain
$$\frac{h^0(E\otimes L^k)}{\rk(E)}- \frac{h^0(F\otimes L^k)}{\rk(F)}- \frac{h^0(E\otimes L^k)}{V\rk(E)\rk(F)}  \Big\Vert \frac{\alpha\beta}{\sqrt{1+\alpha\vert \phi\vert^2}} \pi_F \phi(Id - \pi_F)\Big\Vert_{L^2}^2 \geq 0$$
for all $k$ sufficiently large (with strict inequality when $(E,\phi)$ is simple) which proves the theorem. 
\end{proof}

 \vspace{1cm}
 

\begin{thebibliography}\frenchspacing\smallbreak

\bibitem{AC1}
    L. \'Alvarez-C\'onsul,
    \emph{Some results on the Moduli spaces of quiver bundles}, Geom. Dedicata {\bf 139} (2009) 99--120.

\bibitem{AC2}
    L. \'Alvarez-C\'onsul and O. Garc\'ia-Prada,
    \emph{Kobayashi-Hitchin correspondence, quivers and vortices}, Comm. Math. Phys. \textbf{238} (2003) 1-33.

\bibitem{AC3}
    L. \'Alvarez-C\'onsul and O. Garc\'ia-Prada,
    \emph{Dimensional reduction and quiver bundles}, J. Reine Angew. Math. \textbf{556} (2003) 1-46.

\bibitem{Araujo}
    A. de Araujo,
    \emph{Generalized Quivers, Orthogonal and Symplectic Representations, and Hitchin--Kobayashi Correspondences},  ArXiv:math/1508.00460 (2015).

\bibitem{BBS} R. Berman, B. Berndtsson \and J. Sj\"ostrand \emph{A direct approach to Bergman Kernel assymptotics for positive line bundles}, Ark.\ Mat.\ \textbf{46} no.\ 2  (2007), 197--217.


\bibitem{C1} David Catlin,
        \emph{The Bergman kernel and a theorem of Tian}, Analysis and geometry in several complex variables (Katata, 1997), Trends Math., Birkh\"auser Boston, Boston, MA, (1999), 1--23.

\bibitem{D1}
        S. K. Donaldson,
        \emph{Scalar curvature and projective embeddings, I},
        J. Diff. Geom. \textbf{59} (2001) 479--522.

\bibitem{D2}
		\bysame, \emph{Some numerical results in complex differential geometry},
        Pure Appl. Math. Q. \textbf{5} (2009) 571--618.
        
\bibitem{DonWi}
    R. Donagui and M. Wijnholt,
    \emph{Gluing Branes, I}, ArXiv:1104.2610  (2011).
    
    \bibitem{Douglas}
M. Douglas, L. Robert, S. Lukic, R. Reinbacher \emph{Numerical solution to the Hermitian Yang-Mills equation on the Fermat quintic}
 J. High Energy Phys., \textbf{12}, 083, (2007). 
 
\bibitem{F1}
        C. Fefferman, \emph{The {B}ergman kernel and biholomorphic mappings of pseudoconvex domains}, Invent. Math. \textbf{26}, (1974), 1--65.        

        \bibitem{GFR}
        M. Garcia-Fernandez and J. Ross, \emph{Balanced metrics on twisted Higgs Bundles},  ArXiv:1401.7108 (2014).
        
        \bibitem{Griffiths} P. Griffiths and J. Harris \emph{Principles of algebraic geometry}, Reprint of the 1978 original. Wiley Classics Library. John Wiley \& Sons, Inc., New York, (1994). 

\bibitem{Gomez1}
T. Gomez and I. Sols \emph{Stable Higgs G-sheaves}, Rev. Mat. Iberoam. \textbf{24} (2008), no. 2, 703--719.

\bibitem{Gomez2} T. Gomez and I. Sols \emph{The Hermite-Einstein equation and stable principal bundles (an updated survey)} Geom. Dedicata \textbf{139} (2009), 83--98. 
        
        \bibitem{Hitchin} N. Hitchin \emph{The self-duality equations on a Riemann surface} 
Proc. London Math. Soc. (3) \textbf{55} (1987), no. 1, 59--126.

\bibitem{Huybrechts}
	D. Huybrechts, \emph{Complex Geometry} Universitext,	Universitext. Springer-Verlag, Berlin, 2005.
\bibitem{Keller}
		J. Keller, \emph{Canonical metrics for Vortex type equations}, Math. Annalen, \textbf{337}, 923--979 (2007).
		
\bibitem{KellerLukic}
		J. Keller and S. Lukic, \emph{Numerical Weyl-Petersson metrics on moduli spaces of Calabi-Yau manifolds}, Journal of Geometry and Physics, \textbf{92}, (2015), 252--270 .

		
\bibitem{Ki2}, F. Kirwan, \emph{Cohomology of quotients in symplectic and algebraic geometry},   {Princeton University Press}, {(1984)}
 
		
		
		
		
\bibitem{MM1}
        X. Ma and G. Marinescu,
        \emph{Holomorphic Morse inequalities and Bergman kernels}, Progress in Mathematics, vol. 254, Birkh\"auser Verlag, Basel, (2007).
		
		\bibitem{McDuff} D. McDuff and D. Salamon  \emph{Introduction to symplectic topology}, Second edition. Oxford Mathematical Monographs. The Clarendon Press, Oxford University Press, New York, (1998). x+486 pp. ISBN: 0-19-850451-9 

		\bibitem{Rezza} Z. Lu and  R. Seyyedali 	 \emph{Extremal metrics on ruled manifolds},  Adv. Math. \textbf{258} (2014), 127--153. 
\bibitem{Okonek}
C. Okonek and A.  Teleman \emph{The coupled Seiberg-Witten equations, vortices, and moduli spaces of stable pairs},  Internat. J. Math. \textbf{6} (1995), no. 6, 893--910. 


\bibitem{Schmitt1}
A. Schmitt,
\emph{A universal construction for moduli spaces of decorated vector bundles over curves}, ArXiv:math/0006029v3 (2004)

\bibitem{Schmitt2}
		\bysame,
\emph{Moduli for decorated tuples of sheaves and representation spaces for quivers}, Proc. Indian Acad. Sci. (Math. Sci.), \textbf{115}, No. 1, (2005), 15-49.

\bibitem{Schmitt3}
	\bysame,
\emph{Geometric invariant theory and decorated principal bundles}, Zurich lectures in Advanced Mathematics (2008).


\bibitem{Simpson}		
		C. Simpson, \emph{Moduli of representations of the fundamental group of a smooth projective variety}, I. Inst. Hautes \'Etudes Sci. Publ. Math. {\bf 79} (1994) 47--129.

\bibitem{SimpsonII}		
		C. Simpson, \emph{Moduli of representations of the fundamental group of a smooth projective variety II} \emph{Moduli of representations of the fundamental group of a smooth projective variety. II.}
Inst. Hautes Études Sci. Publ. Math. No. {\bf 80} (1994), 5--79 (1995). 		
		
\bibitem{Simpson1}
		C. Simpson, \emph{Constructing variations of Hodge Structure using Yang--Mills Theory and application to uniformization}, J. Amer. Math. Soc. {\bf 1} (1988) 867--918.

\bibitem{T1}
  	G. Tian, \emph{On a set of polarized {K}\"ahler metrics on algebraic manifolds}, J. Differential Geom. \textbf{32} no.~1, (1990), 99--130.
  	
\bibitem{UY}
        K. K. Uhlenbeck and S.-T. Yau,
        \emph{On the existence of Hermitian--Yang--Mills connections on stable bundles over compact K\"ahler manifolds}, Comm. Pure and Appl. Math. \textbf{39-S}, 1986, 257--293; \textbf{42}, (1989), 703--707.

\bibitem{WangLi}
		L. Wang, \emph{Bergman Kernel and stability of holomorphic vector bundles with sections}, MIT Thesis, (2003).

\bibitem{W1}
        X. Wang, \emph{Canonical metric and stability of vector bundles over a projective manifold},
        Ph.D. Thesis, Brandeis University, (2002).

\bibitem{W2}
        \bysame, \emph{Balance point and stability of vector bundles over a projective manifold},
        Math. Res. Lett. \textbf{9} no. 2-3, (2002), 393--411.
        ,
\bibitem{Went} R. Wentworth \emph{ Higgs bundles and local systems on Riemann surfaces}, To appear in CRM Advanced Courses in Mathematics.  ArXiv:1402.4203, (2014). 

\bibitem{Y1} S.-T. Yau,
        \emph{Nonlinear analysis in geometry}, Monographies de L'Enseignement  Math\'ematique \textbf{33}, (1986).

\bibitem{Z1} S. Zelditch,
        \emph{Szeg\"{o} kernels and a theorem of {T}ian}, Internat.  Math. Res. Notices, \textbf{6}, (1998), 317--331.

\end{thebibliography}
\end{document}